\numberwithin{equation}{section}
\begin{document}


\title[Stratified Morse critical points and  locally tame singularities]{Stratified Morse critical points and Brasselet number on non-degenerate locally tame singularities}

\author{Tha\'is M. Dalbelo}
\address{Departamento de Matem\'atica, 
	Universidade Federal de S\~ao Carlos (UFSCar),
	Brazil}
\email{thaisdalbelo@ufscar.br}

\author{Hellen Santana}
\address{Instituto de Ci\^encias Tecnológicas e Exatas, Universidade Federal do Triângulo Mineiro (UFTM),	Brazil}
\email{hellen.santana@uftm.edu.br}

\thanks{Tha\'is M. Dalbelo is partially support by FAPESP under grants $2019/21181-0$ and $2023/01018-2$, Hellen Santana is partially support by FAPESP under grants $2019/21181-0$ and $2022/06968-6$.}

\subjclass[2010]{14B05; 32S0; 55S35; 58K45}
\keywords{non-degenerate locally tame singularities, Brasselet number, Euler obstruction, Morse points}

\begin{abstract}

The generalization of the Morse theory presented by Goresky and MacPherson is 
a landmark that divided completely the topological and geo\-me\-tri\-cal 
study of singular spaces. Let \{$X_t\}_t$ be a suitable family of germs at $0$ of complete intersection varieties in  $\mathbb{C}^n$ and $\{f_t\}_t, \{g_t\}_t$ families of non-constant polynomial functions on $X_t$. If the germs $X_t$, $X_t \cap f_t^{-1}(0)$ and $X_t\cap f_t^{-1}(0) \cap g_t^{-1}(0)$ are non-degenerate, locally tame, complete intersection varieties, for each $t,$ we prove that the difference of the Brasselet numbers,  
${\rm B}_{f_t,X_t}(0)$ and ${\rm B}_{f_t,X_t\cap g_t^{-1}(0)}(0)$, is related with the 
number of Morse critical points {on the regular part of the Milnor fiber} of 
$f_t$ appearing in a morsefication of $g_t$, even in the case where $g_t$ has a 
critical locus with arbitrary dimension. This result connects topological and 
geometric properties and allows us to determine some interesting formulae, 
mainly in terms of the combinatorial information from Newton polyhedra.  
\end{abstract}

\maketitle

\tableofcontents

\section{Introduction}

Given a topological space $X$, a smooth real valued function $f$ on $X$ and  a real number $c$, the fundamental problem of Morse theory is to study the topological changes in the space $X_c = \{f \leq c\}$ as the number $c$ varies.

In classical Morse Theory, the space $X$ is taken to be a compact differentiable manifold. In \cite{GM}, Goresky and MacPherson extended Morse Theory to the setting of Whitney stratified spaces.

An important invariant associated to a germ of an analytic function $f:(\mathbb{C}^n,0) 
\to (\mathbb{C},0)$ with an isolated critical point at the origin is denoted by $\mu(f)$ and it is defined as 
\[
\mu(f):=dim_{\mathbb{C}}\frac{\mathcal{O}_n}{J(f )},
\] where $\mathcal{O}_n$ is the ring of germs of analytic functions at the 
origin, and $J(f)$ is the Jacobian ideal of $f$. This invariant, defined by Milnor in \cite{Milnor} and called the Milnor number of $f$ at the origin, provides 
information on the local geometry of $f$ and also information about the local 
topology of the hypersurface $X = f^{-1}(0)$. For example, when $f$ has an 
isolated critical point at the origin, the following invariants coincide up 
to sign:

\begin{itemize}
 \item[(a)] the Milnor number of $f$ at the origin; 
 \item[(b)] the number of Morse critical points of a morsefication of $f$; 
 \item[(c)] the Poincar\'e-Hopf index of the complex conjugate of the gradient vector field of $f$.
\end{itemize}

Let $(X,0)$ be a pure-dimensional germ of an analytic singular space embedded in 
$\mathbb{C}^n$ and $f:(X,0) \to (\mathbb{C},0)$ a germ of analytic function with stratified isolated singularity at the origin. 
Brasselet et al. introduced in \cite{BMPS} a generalization of $(c)$, called 
Euler obstruction of $f$, denoted by ${\rm Eu}_{f,X}(0)$. Roughly, it is the 
obstruction to extending a lifting of the conjugate of the gradient vector 
field of $f$ as a section of the Nash bundle of $(X,0)$. It is then natural
to compare ${\rm Eu}_{f,X}(0)$ to several generalizations of the Milnor number for a function $f$ on a singular germ $(X,0)$, which was nicely treated in \cite{STV}.

Dutertre and Grulha \cite{DG} proved that, for a function germ $f:(X,0)\to 
(\mathbb{C},0)$ with stratified isolated singularity at the origin, the difference 
${\rm Eu}_{X}(0)-{\rm Eu}_{f,X}(0)$ can be computed in terms of the relative polar 
varieties. Here ${\rm Eu}_{X}$ denotes the famous ``local Euler obstruction" constructible function of MacPherson \cite{MacPherson}. Even if $f$ has a stratified non-isolated singularity, Dutertre and
Grulha \cite{DG} introduced the Brasselet number $B_{f,X}(0)$ in terms of a good $a_f$-stratification via
$$B_{f,X}(0) = \chi(X \cap f^{-1}(\delta)\cap B_{\varepsilon}, \rm{Eu}_X) \ \ \ \text{for} \ \ \ 0<|\delta|\ll\varepsilon\ll1,$$
with $B_{\varepsilon}$ a small open ball of radius $\varepsilon$ (in some local embedding).
So this is the corresponding weighted Euler characteristic of the Milnor fiber $X \cap f^{-1}(\delta)\cap B_{\varepsilon}$. In the important special case of
a stratified isolated critical point of $f$, this Brasselet number reduces by \cite[Theorem $3.1$]{BMPS} (as recalled here in Theorem \ref{Euler obstruction of a function formula})
to the difference
$$B_{f,X}(0) = {\rm Eu}_X(0) - {\rm Eu}_{f,X}(0).$$
The Brasselet 
number $B_{f,X}(0)$ provides interesting results, like the L\^e-Greuel type formula 
proved in \cite{DG}. This invariant is also closely related to the Euler defect
$$D_{f,X}(0) = {\rm Eu}_X(0) - B_{f,X}(0),$$
as defined in \cite{BMPS}, which also deals with holomorphic germ-functions with arbitrary singularities.

Topological objects associated to complex functions such as the Milnor number, the Euler obstruction of a function and the Brasselet number play an useful rule in the study of equisingularity, specially in terms of Whitney equisingularity. Let $f(t,\bold{z})=f(t,z_1,\ldots,z_n)$ be a non-constant polynomial function in $\mathbb{C}\times\mathbb{C}^n$, such that $f(t,0)=0,$ for all small $t.$ Denote $f_t(\bold{z})=f(t,\bold{z})$ and let $V(f_t)$ be the hypersurface in $\C^n$ given by the zeros of $f_t.$ The family $\{V(f_t)\}_t$ is Whitney equisingular if there exists a Whitney $(b)-$ regular stratification of $V(f)$ such that the $t$- axis $\C\times\{0\}$ is a stratum. A result about this behavior was given by Briançon in \cite{Briancon} for families of isolated hypersurface singularities. Briançon \cite{Briancon} gives a sufficient condition for a family to be Whitney equisingular when the Newton boundary  of $f_t$ is independent of $t$ and $f_t$ is (Newton) non-degenerate.  The Whitney equisingularity combined with the Thom-Mather first isotopy theorem implies topological equisingularity, that is, the local ambient topological type of $V(f_t)$ at $0$ is independent of $t,$ for a small $t.$ Hence, in this case, the Milnor number of $f_t$ is constant on the given family, for $t$ small enough. 

For non-isolated singularities, as one may expect, Whitney equisingularity is more delicate. In this setting, Eyral and Oka have several results for some classes of singularities. For example, in \cite[Theorem 3.8]{EO}, they prove the Whitney equisingularity for a family of non-degenerate functions with constant Newton boundary and satisfying an uniformly ``locally tame" condition. In a more general setting, in \cite[Theorem $3.3$]{EO2} (recalled here in Theorem \ref{mt}), they prove that if a family of non-constant polynomial functions $\{p_t\}_t$ is Newton-admissible (see Definition \ref{Newton}) then the family of complete intersection varieties $X_t$ associated to $\{p_t\}_t$ is Whitney equisingular. In \cite[Theorem $5.3$]{EO2} they show that the Milnor
fibrations of $p_t$ and $p_0$ at $0$ are isomorphic for small $t$. So it is
natural to ask about the constancy of the Euler obstruction or
Brasselet number of the functions $p_t$ for small $t$. This will be
positively answered by the main results of this paper, based in
addition on some results of Matsui-Takeuchi \cite{MT1} for expressing
suitable invariants in terms of volumes of Newton polyhedra.



We focus on Eyral and Oka's setting  \cite{EO2}. For coordinates $(t, z) := (t, z_1, \dots ,z_n)$ in $\mathbb{C} \times \mathbb{C}^n$, and for any
$k \in K_0 := \{1, \dots , k_0\}$, with a given $k_0\geq 3$, let $f^k
: \mathbb{C} \times \mathbb{C}^n \to \mathbb{C}$ be a non-constant polynomial
function satisfying $f^k(t, 0) = 0$, for all $t$. Define the product function
$p(t, z) := f^1(t, z) \cdots f^{k_0}(t, z)$, $p_t(z) := p(t, z)$ and $f^k_t(z) := f^k(t, z)$. Let $X_t \subset \mathbb{C}^n$ be the germ
given by $V(f^1_t, \dots, f^{k_0 -2}_t)$ and $f_t=f_t^{k_0 -1}, g_t=f_t^{k_0}$ non-constant polynomial functions on $X_t$. Our main results can now be stated as follows. Let $X_t^{g_t} := X_t \cap g_t^{-1}(0)$. If the family $\{p_t\}_t$ is Newton-admissible (see Definition \ref{Newton}), then we get in Proposition \ref{Corolario 4.3 para variedades singulares} (for $0<|\delta|\ll\varepsilon\ll1$):
\begin{equation}\label{eq1nv}
 B_{f_t,X_t}(0)-\chi(X_t^{g_t} \cap f_t^{-1}(\delta)\cap B_{\varepsilon}, {\rm Eu}_{X_t})=(-1)^{d-1}m_t,
\end{equation}
with $d = dim_{\mathbb{C}}X_t$ and $m_t$ the number of stratified Morse critical
points of a morsefication of $g_t : X_t^{g_t}\cap f_t^{-1}(\delta) \cap B_{\varepsilon} \to \mathbb{C}$ appearing
on $(X_t)_{reg} \cap f_t^{-1}(\delta) \cap \{g_t \neq 0\} \cap B_{\varepsilon}$. This can be seen as an
extension of a Lê-Greuel type formula of Dutertre-Grulha \cite[Theorem $4.4$]{DG} (recalled here in Theorem \ref{Le Greuel}) in this Newton-admissible context without any assumption on the dimension
of the stratified critical locus $\Sigma_{\mathcal{V}_{f_t}} g_t$ of the good Whitney stratification $\mathcal{V}_{f_t}$ of $X_t$ relative to $f_t$ given in Remark \ref{remark34}. Note that
in this case $\Sigma_{\mathcal{V}_{f_t}} g_t \subset \{f_t =0\}\cup\{g_t=0\}$ is a union of strata of $\mathcal{V}_{f_t}$ (by the proof of Lemma \ref{dimensionsing}). With the same assumptions
and notations, the main result Theorem \ref{diferencadenumerosdebrasselet1} can be formulated as
\begin{equation}\label{eq2nv}
 B_{f_t,X_t}(0)- B_{f_t,X_t^{g_t}}(0) -\chi(X_t^{g_t} \cap f_t^{-1}(\delta)\cap B_{\varepsilon}, {\rm Eu}_{X_t^{g_t}}-{\rm Eu}_{X_t})=(-1)^{d-1}m_t,
\end{equation}
where $m_t$ is the number of stratified Morse critical points of a partial 
morsefication (Definition \ref{partial morsefication}) of $g_t: X_t\cap f_t^{-1}(\delta) \cap B_{\varepsilon} \to 
\mathbb{C}$ appearing on $({X_t})_{\rm reg} \cap f_t^{-1}(\delta) \cap 
 \{g_t 
\neq 0\} \cap B_{\varepsilon}$. 
Note that the support of the constructible function ${\rm Eu}_{X_t^{g_t}}-{\rm Eu}_{X_t}|X_t^{g_t}$ is a union of strata of $\Sigma_{\mathcal{V}_{f_t}} g_t$ contained in $\{g_t=0\}$.
In case of a generic linear form $f_t$ with respect to $X_t$, Corollary
\ref{corollary39} states the following counterpart:
\begin{equation}\label{eq3nv}
 {\rm Eu}_{X_t}(0)- {\rm Eu}_{X_t^{g_t}}(0) -\chi(X_t^{g_t} \cap f_t^{-1}(\delta)\cap B_{\varepsilon}, {\rm Eu}_{X_t^{g_t}}-{\rm Eu}_{X_t})=(-1)^{d-1}m_t.
\end{equation}

This paper is organized as follows. In Section $2$ we present some background material concerning the Euler obstruction, Brasselet number and Newton admissible family of non-constant polynomials, which 
will be used in the entire work. In Section $3$,  given a Newton-admissible family $\{f_t\}_t$, we consider {the germ at 
the origin} of a locally tame complete intersection variety
 $(X_t,0)$ given by $V(f_t^1, \dots, f_t^{k_0 -2})$ and $f_t=f_t^{k_0 -1}, g_t=f_t^{k_0}$ non-constant polynomial functions on $X_t$. We construct a good stratification 
$\mathcal{V}_{f_t}$ of {the representative $X_t$} re\-la\-ti\-ve to $f_t$, also 
a good stratification $\mathcal{V}_{f_t}^{g_t}$ of $X_t^{g_t}$ re\-la\-ti\-ve to $f_t$ and we prove our main result as stated above. To do this, we first present a generalization of \cite[Corollary 4.3]{DG}. In Section $4$ we establish some
formulae for the Brasselet numbers $B_{f_t,X_t}(0)$ and $B_{f_t,X_t^{g_t}}
(0)$ in
terms of volumes of Newton polyhedra (see formulae (\ref{NB2}) and
(\ref{NB1})), based in addition on some results of Matsui-Takeuchi \cite{MT1}. As an application, we obtain in Corollary \ref{corollary43} that the
number of Morse critical points $m_t$ as in (\ref{eq1nv}) and (\ref{eq2nv}) above
does not depend on $t$ for $t$ small enough, and the same is true
for the Brasselet numbers $B_{f_t,X_t}(0)$ and $B_{f_t,X_t^{g_t}}
(0)$.

\section{Preliminary notions and results}

In this section, we provide the necessary background to develop our results.

\subsection{Euler obstruction}


The local Euler obstruction was defined by MacPherson in 
\cite{MacPherson} as a tool to prove the conjecture about the existence and 
unicity of the Chern classes in the singular case. Since then it has been extensively investigated by many
authors such as Brasselet and Schwartz \cite{BS}, Sebastiani \cite{Sebastiani}, L\^{e} and Teissier \cite{LT}, Sabbah \cite{Sabbah}, 
Dubson \cite{Dubson}, Kashiwara \cite{Kashiwara} and others.

Let $(X,0) \subset (\mathbb{C}^n,0)$ be a pure-dimensional complex analytic subset $X \subset U$ of an open set $U \subset \mathbb{C}^n$. We consider a complex analytic Whitney stratification $\mathcal{V} = \{V_i\}$ of $U$ adapted to $X$ (i.e. $X$ is a union of strata) and we assume that $\{0\}$ is a stratum. We choose a representative $X$ small enough of $(X,0)$ such that $0$ belongs to the closure of all the strata. We write $X= \cup_{i=0}^q V_i$ where $V_0 = \{0\}$ and $V_q = X_{\rm reg}$ is the set of regular points of $X$. We assume that the strata $V_0,\ldots,V_{q-1}$ are connected. Note that the closures $\overline{V_0},\ldots,\overline{V_{q-1}}$ are complex analytic subsets of $U$.

Let $G(d,n)$ be the Grassmannian manifold and $\tilde{X}$ the Nash modification of $X.$ Consider the extension $\mathcal{T}$ of the tautological bundle over $U\times G(d,n).$ Since \linebreak$\tilde{X}\subset U\times G(d,n)$, we consider $\tilde{\mathcal{T}}$ the restriction of $\mathcal{T}$ to $\tilde{X},$ called the \textbf{Nash bundle}\index{Nash bundle}, and $\pi:\tilde{\mathcal{T}}\rightarrow\tilde{X}$ the projection of this bundle.

In this context, denoting by $\varphi$ the natural projection of $U\times G(d,n)$ at $U.$ Con\-si\-de\-ring $\vert\vert z\vert\vert=\sqrt{z_1\overline{z_1}+\cdots+z_n\overline{z_n}}$, the $1$-differential form $w=d\vert\vert z\vert\vert^2$ over $\mathbb{C}^n$ defines a section in $T^{*}\mathbb{C}^n$ and its pullback $\varphi^{*}w$ is a $1$- form over $U\times G(d,n).$ Denote by $\tilde{w}$ the restriction of $\varphi^{*}w$ over $\tilde{X}$, which is a section of the dual bundle $\tilde{\mathcal{T}}^{*}.$

Choose $\epsilon$ small enough for $\tilde{w}$ be a nonzero section over $\nu^{-1}(z), 0<\vert\vert z \vert\vert\leqslant\epsilon,$ let $B_{\epsilon}$ be the closed ball with center at the origin with radius $\epsilon$ and denote by

\begin{enumerate}

\item $Obs(\tilde{\mathcal{T}}^{*},\tilde{w})\in\mathbb{H}^{2d}(\nu^{-1}(B_{\epsilon}),\nu^{-1}(S_{\epsilon}),\mathbb{Z})$ the obstruction for extending $\tilde{w}$ from $\nu^{-1}(S_{\epsilon})$ to $\nu^{-1}(B_{\epsilon});$

\item $O_{\nu^{-1}(B_{\epsilon}),\nu^{-1}(S_{\epsilon})}$ the fundamental class in $\mathbb{H}_{2d}(\nu^{-1}(B_{\epsilon}),\nu^{-1}(S_{\epsilon}),\mathbb{Z}).$ 
\end{enumerate}

\begin{definition}
The \textbf{local Euler obstruction}\index{local Euler obstruction} of $X$ at $0, \ Eu_X(0),$ is given by the evaluation $$Eu_X(0)=\langle Obs(\tilde{\mathcal{T}}^{*},\tilde{w}),O_{\nu^{-1}(B_{\epsilon}),\nu^{-1}(S_{\epsilon})}\rangle.$$
\end{definition}

In \cite[Theorem 3.1]{BLS}, Brasselet, L\^e and Seade proved a formula to 
compute the local Euler obstruction using generic linear forms.

\begin{theorem}\label{BLS}
Let $(X,0)$ and $\mathcal{V}$ be given as before, then for each generic 
linear form $l$, there exists $\varepsilon_0$ such that for any $\varepsilon$ with 
$0<\varepsilon<\varepsilon_0$ and $\delta\neq0$ sufficiently small, the Euler 
obstruction of $(X,0)$ is equal to 
\[
{\rm Eu}_X(0)=\sum^{q}_{i=1}\chi(V_i\cap B_{\varepsilon}\cap l^{-1}(\delta)) \cdot 
{\rm Eu}_{X}(V_i),
\]
where $\chi$ is the Euler characteristic, ${\rm Eu}_{X}(V_i)$ is the Euler 
obstruction of $X$ at a point of $V_i, \ i=1,\ldots,q$ and 
$0<|\delta|\ll\varepsilon\ll1$. In terms of weighted
Euler characteristics this can be reformulated as
$${\rm Eu}_X(0)=\chi(X\cap l^{-1}(\delta)\cap B_{\varepsilon},{\rm Eu}_X) \ \ \ \text{for} \ \ \ 0<|\delta|\ll\varepsilon\ll1.$$
\end{theorem}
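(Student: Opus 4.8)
The plan is to compute ${\rm Eu}_X(0)$ straight from its definition as an obstruction class, using the generic linear form $l$ to force the auxiliary vector field into a product shape. Recall that ${\rm Eu}_X(0)$ is the value, on the fundamental cycle of $(\nu^{-1}(X\cap B_\varepsilon),\nu^{-1}(X\cap S_\varepsilon))$, of the obstruction to extending the canonical Nash lift $\tilde v$ (Lemma of Brasselet--Schwartz) of a stratified radial field $v$ on $X\cap S_\varepsilon$ to a nowhere-zero section of $\widetilde{\mathcal{T}}$ over $\nu^{-1}(X\cap B_\varepsilon)$. By standard obstruction theory this number equals the total index $\sum_{p}{\rm Ind}_p(\tilde w)$ of the Nash lift of \emph{any} stratified vector field $w$ on $X\cap B_\varepsilon$ that restricts to $v$ on $X\cap S_\varepsilon$ and has only isolated stratumwise zeros, where ${\rm Ind}_p(\tilde w)$ denotes the local index of the section $\tilde w$ along the fibre $\nu^{-1}(p)$. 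Thus the whole problem is to build a convenient $w$ and count.

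First I would choose $w$ adapted to $l$. On the outer region $R=\{x\in X\cap B_\varepsilon:|l(x)|\ge|\delta|\}$, a stratumwise partition of unity lets me combine the radial field with the stratified gradient-like field of ${\rm Re}\,l$ so that $w|_R$ is nowhere zero while $w$ still points outward on $X\cap S_\varepsilon$: here I use that, $l$ being generic, the curve selection lemma and Whitney's condition~(b) give that $l$ has no stratumwise critical point in a punctured neighbourhood of $0$ (hence neither does ${\rm Re}\,l$), and that $l^{-1}(\delta)$ is transverse to every stratum near $0$. All zeros of $w$ are then trapped in the tube $T=\{x\in X\cap B_\varepsilon:|l(x)|\le|\delta|\}$. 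By the Thom--Mather isotopy theorem, genericity of $l$ makes $l\colon T\setminus l^{-1}(0)\to\overline{D_\delta}\setminus\{0\}$ a locally trivial stratified fibration with fibre the slice $X_\delta:=X\cap B_\varepsilon\cap l^{-1}(\delta)$, and $T$ a stratified regular neighbourhood of the special fibre; this lets me take $w|_T$ to be, stratum by stratum, the radial field of the slices transported across $T$, so that its zeros in $V_i$ are in index-preserving bijection with the zeros of a radial (outward on the boundary) field on $V_i\cap X_\delta$. Two facts then finish the count: (i) the indices of the zeros of such a field on the complex manifold $V_i\cap X_\delta$ add up, by Poincar\'e--Hopf and additivity of the Euler characteristic over the Whitney pieces (legitimate since everything in sight is complex analytic), to $\chi(V_i\cap B_\varepsilon\cap l^{-1}(\delta))$; and (ii) the Nash-bundle local index of $\tilde w$ at a zero $p\in V_i$ equals $\iota_p(w|_{V_i})\cdot{\rm Eu}_X(V_i)$, where $\iota_p$ is the ordinary index of $w|_{V_i}$ at $p$ in the manifold $V_i$. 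Summing over $i$ yields ${\rm Eu}_X(0)=\sum_{i=1}^q\chi(V_i\cap B_\varepsilon\cap l^{-1}(\delta))\cdot{\rm Eu}_X(V_i)$.

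The main obstacle is the genericity bookkeeping for $l$ together with step~(ii): proving that the local index of the Nash lift of $w$ at a zero inside a positive-dimensional stratum $V_i$ depends on $V_i$ only through ${\rm Eu}_X(V_i)$ and merely multiplies the ordinary index there. This requires a careful local model near a generic point of $V_i$ --- the product structure $X\cong V_i\times(\text{normal slice})$, the compatibility of the Nash transformation and of the Nash bundle with this product, and the fact that the Brasselet--Schwartz lift of a product field is the product of the lifts --- that is, exactly the mechanism by which ${\rm Eu}_X$ is constant along strata. Secondary care is needed because the $V_i$ are in general neither closed nor compact, so that the Poincar\'e--Hopf count in (i) must be carried out on $V_i\cap X_\delta$ as a manifold-with-corners and the Euler characteristics assembled by excision and additivity rather than read off from the closures $\overline{V_i}$; and one must check that the stratified fibration and regular-neighbourhood picture for $T$ survives intersecting with the ball $B_\varepsilon$ (corners along $S_\varepsilon$), which is where transversality of $l^{-1}(\delta)$ to the strata of $X\cap S_\varepsilon$ re-enters.
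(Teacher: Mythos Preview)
The paper does not give its own proof of this theorem: it is quoted as background from Brasselet, L\^e and Seade \cite{BLS} (introduced with ``In \cite[Theorem 3.1]{BLS}, Brasselet, L\^e and Seade proved a formula\ldots'') and stated without argument. So there is no in-paper proof to compare your proposal against.

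For what it is worth, your outline is essentially the strategy of the original \cite{BLS} proof: replace the radial field by a stratified field governed by the generic linear form $l$, push all zeros into the Milnor tube of $l$, use the stratified local triviality of $l$ to identify the zero set stratum by stratum with that of a radial field on the slice $l^{-1}(\delta)$, and then invoke the proportionality principle of Brasselet--Schwartz \cite{BS} to convert each Nash-bundle local index at a zero on $V_i$ into the ordinary Poincar\'e--Hopf index times ${\rm Eu}_X(V_i)$. Your flagging of step~(ii) as the heart of the matter is exactly right: that multiplicativity is precisely the proportionality theorem, and it relies on the normal-slice product structure along a stratum that you describe.
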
 


In the following, we use the notion of stratified critical locus of a function introduced by Massey \cite{MR1404920}.
\begin{definition}
The \textbf{critical locus of $f$ relative to $\mathcal{V}$}, 
$\Sigma_{\mathcal{V}}f$, is defined by the union 
\begin{equation*}
	\Sigma_{\mathcal{V}}f=\bigcup_{V_{\lambda}\in\mathcal{V}}\Sigma(f|_{V_{\lambda}}).
\end{equation*}
\end{definition}

In \cite{BMPS}, Brasselet, 
Massey, Parameswaran and Seade give the definition of an invariant associated to a holomorphic function $f:X\rightarrow\mathbb{C}$ with a stratified isolated singularity at the origin. This invariant can be related to the local Euler obstruction through  the following formula, and it is called the local Euler obstruction of a function.

\begin{theorem}[\cite{BMPS}, Theorem 3.1]\label{Euler obstruction of a function formula}
Let $(X,0)$ and $\mathcal{V}$ be given as before and let 
\linebreak$f:(X,0)\rightarrow(\mathbb{C},0)$ be a function with an isolated 
singularity at $0$. For $0<|\delta|\ll\varepsilon\ll1$, we have
 $${\rm Eu}_{f,X}(0)={\rm Eu}_X(0)-\sum_{i=1}^{q}\chi(V_i\cap B_{\varepsilon}\cap f^{-1}(\delta)) \cdot {\rm Eu}_X(V_i).$$
 In terms of weighted
Euler characteristics this can be reformulated as
$${\rm Eu}_{f,X}(0)= {\rm Eu}_{X}(0) - \chi(X\cap f^{-1}(\delta)\cap B_{\varepsilon},{\rm Eu}_X) \ \ \ \text{for} \ \ \ 0<|\delta|\ll\varepsilon\ll1.$$
\end{theorem}



{In the stratified case, as we consider $\{0\}$ a stratum, how could we 
``measure'' the degeneracy of $f$ at this point? In order to have a good 
generalization of a morsefication in the singular case, we need to deal with 
the contribution of the variety at a point in a $0$-dimensional strata. The 
idea is to characterize a kind of ``Morse'' point in this setting. These 
points are the generic points defined below, following \cite[page 971]{MR1404920}}.

\begin{definition}
Let $\mathcal{V}=\{V_{\beta}\}$ be a complex Whitney stratification of a reduced complex 
analytic space $X$ and $p$ be a point in a stratum $V_{\beta}$ of 
$\mathcal{V}$. A \textbf{degenerate tangent plane of $\mathcal{V}$ at $p$} is 
an element $T$ of some Grassmanian manifold such that 
$T=\displaystyle\lim_{p_i\rightarrow p}T_{p_i}V_{\alpha}$, where $p_i\in 
V_{\alpha}$, for some $V_{\alpha}\neq V_{\beta}$.
\end{definition}

\begin{definition}
Let $(X,x)\subset(U,x)$ be a germ of complex analytic space in $\mathbb{C}^n$ 
equipped with a Whitney stratification and let 
$f:(X,x)\rightarrow(\mathbb{C},0)$ be an analytic function, given by the 
restriction of an analytic function $F:(U,x)\rightarrow(\mathbb{C},0)$. Then 
$x$ is said to be a \textbf{generic point}\index{holomorphic function 
germ!generic point of} of $f$ if $Ker(d_xF)$ is transverse in 
$\mathbb{C}^n$ to all degenerate tangent planes of the Whitney stratification 
at $x$, where $U$ is an open set of $\mathbb{C}^n$ and $d_xF$ denotes the derivative of $F$ at $x.$
\end{definition}

The definition of a morsefication of a function is given as follows. 

\begin{definition}
Let $\mathcal{W}=\{W_0,W_1,\ldots,W_q\}$, with $x\in W_0$, be a Whitney 
stratification of the complex analytic space $X$. A function 
$f:(X,x)\rightarrow(\mathbb{C},0)$ has in $x$ a \textbf{stratified Morse critical point}, if $x$ is a generic point of $f|_{W_i}$ for all $i \neq 0$, and $f|_{W_0}: W_0\rightarrow\mathbb{C}$ has in $x$ a Morse critical point in case
$\dim W_0\geq1$.
A \textbf{stratified morsefication}\index{holomorphic function 
germ!stratified morsefication of} of a germ of analytic function 
$f:(X,x)\rightarrow(\mathbb{C},0)$ is a deformation $\tilde{f}$ of $f$ such 
that $\tilde{f}$ has only stratified Morse critical points.
\end{definition}

Using the previous definitions, we can now state  Seade, Tib\u{a}r and Verjovsky result \cite[Proposition 2.3]{STV}.

\begin{proposition}\label{Eu_f and Morse points}
Let $f:(X,0)\rightarrow(\mathbb{C},0)$ be a germ of analytic function with stratified isolated singularity at the origin, with $X$ pure $d$-dimensional. Then, \begin{center}
${\rm Eu}_{f,X}(0)=(-1)^dm$,
\end{center}
where $m$ is the number of Morse points in $X_{\rm reg}$ (in a
small open neighborhood of $0$) in a 
stratified 
morsefication of $f$.
\end{proposition}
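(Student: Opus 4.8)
The plan is to reduce the statement to the Brasselet–Lê–Seade formula for the local Euler obstruction together with Theorem~\ref{Euler obstruction of a function formula}, and then to identify the resulting alternating sum of Euler characteristics with a count of Morse points lying in $X_{\rm reg}$. First I would fix a Whitney stratification $\mathcal{V} = \{V_i\}_{i=0}^{q}$ of a small representative $X$ with $V_0 = \{0\}$ and $V_q = X_{\rm reg}$, as in the setup preceding Theorem~\ref{Euler obstruction of a function formula}, and I would invoke that theorem to write
\[
{\rm Eu}_{f,X}(0) = {\rm Eu}_X(0) - \sum_{i=1}^{q} \chi\bigl(V_i \cap B_\varepsilon \cap f^{-1}(\delta)\bigr)\cdot {\rm Eu}_X(V_i),
\]
valid for $0 < |\delta| \ll \varepsilon \ll 1$ since $f$ has an isolated singularity at $0$. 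The key observation is that the ``local'' contribution of each proper stratum $V_i$, $i < q$, to ${\rm Eu}_{f,X}(0)$ is exactly its contribution to ${\rm Eu}_X(0)$ coming from the Brasselet–Lê–Seade formula, because $0$ being a generic point (equivalently, after passing to a stratified morsefication, the local structure of $f$ near points of $V_i$ is that of a generic linear form): the terms $\chi(V_i \cap B_\varepsilon \cap f^{-1}(\delta))$ for $i<q$ coincide, up to the controlled comparison in \cite{BMPS, STV}, with the corresponding terms $\chi(V_i \cap B_\varepsilon \cap l^{-1}(\delta))$ for a generic linear form $l$. Hence all proper-stratum contributions cancel and only the top stratum $V_q = X_{\rm reg}$ survives.

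Next I would handle the top-dimensional term. Subtracting the Brasselet–Lê–Seade expression for ${\rm Eu}_X(0)$ from the formula above, one is left with
\[
{\rm Eu}_{f,X}(0) = \chi\bigl(X_{\rm reg} \cap B_\varepsilon \cap l^{-1}(\delta)\bigr) - \chi\bigl(X_{\rm reg} \cap B_\varepsilon \cap f^{-1}(\delta)\bigr),
\]
since ${\rm Eu}_X(X_{\rm reg}) = 1$. The right-hand side is (up to sign) the reduced Euler characteristic of the complex-link-type pair obtained by deforming the level set of a generic linear form to the Milnor fiber of $f$, restricted to the regular part. By stratified Morse theory \cite{GM}, passing from $l$ to a stratified morsefication $\tilde f$ of $f$ changes the homotopy type of $X_{\rm reg} \cap B_\varepsilon \cap \tilde f^{-1}(\delta)$ by attaching one $d$-cell for each Morse critical point of $\tilde f$ lying in $X_{\rm reg}$ (each such point being an ordinary complex Morse singularity of the restriction $\tilde f|_{X_{\rm reg}}$, contributing a vanishing cycle of real dimension $d = \dim_{\mathbb C} X$). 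Therefore
\[
\chi\bigl(X_{\rm reg} \cap B_\varepsilon \cap l^{-1}(\delta)\bigr) - \chi\bigl(X_{\rm reg} \cap B_\varepsilon \cap f^{-1}(\delta)\bigr) = (-1)^d m,
\]
where $m$ is the number of such Morse points, which is precisely the claim.

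The main obstacle I expect is the cancellation step for the proper strata: one must argue carefully that, for a \emph{stratified} morsefication (not merely a linearization), the contribution $\chi(V_i \cap B_\varepsilon \cap \tilde f^{-1}(\delta))$ with $i<q$ is unchanged from the generic-linear-form value, i.e.\ that the genericity of $0$ with respect to each $V_i$, $i\neq 0$, together with the Morse condition on $f|_{W_0}$ when $\dim W_0 \geq 1$, forbids vanishing cycles concentrated in the proper strata. This is exactly where the hypothesis that $0$ is a generic point enters, and it is the content of the comparison arguments in \cite{BMPS} and \cite{STV}; I would cite \cite[Theorem 3.1]{BMPS} and \cite[Proposition 2.3]{STV} for this local computation rather than redo it. A secondary technical point is verifying that a stratified morsefication exists and that its Morse points in $X_{\rm reg}$ are finite in number and stable under the small perturbations needed to apply the Euler-characteristic formulae simultaneously for $f$ and $\tilde f$; this is standard genericity and follows from the transversality built into the definition of Morse stratified.
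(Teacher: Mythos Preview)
The paper does not prove this proposition; it is quoted verbatim from Seade, Tib\u{a}r and Verjovsky \cite[Proposition~2.3]{STV} as background material, with no argument supplied. So there is no ``paper's own proof'' to compare against.

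As for your proposal itself: the overall architecture (combine the Brasselet--L\^e--Seade formula for ${\rm Eu}_X(0)$ with Theorem~\ref{Euler obstruction of a function formula}, cancel the contributions of the proper strata, and identify the surviving top-stratum term with $(-1)^d m$ via cell attachment) is indeed the route taken in \cite{STV}. However, your write-up contains a circularity: at the crucial cancellation step you say you would ``cite \cite[Proposition~2.3]{STV} for this local computation rather than redo it'', but \cite[Proposition~2.3]{STV} \emph{is} the statement you are trying to prove. The actual content that must be supplied is the argument that, at a generic point of a proper stratum $V_i$, the normal Morse datum of $f$ (equivalently of a stratified morsefication $\tilde f$) coincides with that of a generic linear form, so that $\chi(V_i\cap B_\varepsilon\cap f^{-1}(\delta)) = \chi(V_i\cap B_\varepsilon\cap l^{-1}(\delta))$ for $i<q$. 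This uses the definition of generic point (transversality of $\mathrm{Ker}(d_0F)$ to all degenerate tangent planes) together with the normal slice arguments of \cite{GM}; it is not automatic and cannot be deferred to the result under proof.
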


\subsection{Stratifications and Brasselet number}

Let $(X,0)$ be an equidimensional complex analytic germ and let $f : (X,0) \to (\mathbb{C}, 0)$ be a holomorphic function-germ. Through this work, we use the following notation: for 
subsets $A\subset \C^n$, $B\subset \C^m$ and a function $f:A\to B$, $A^f:= A\cap f^{-1}(0)$.


\begin{definition}\label{good stratification}
	A \textbf{good stratification} of $X$ relative to $f$ is a stratification 
	$\mathcal{V}$ of $X$ which is adapted
	to $X^{f}$ (\ie $X^{f}$ is a union of strata) satisfying the following conditions: $\bigsetdef{V_i \in 
	\mathcal{V}}{ V_i \not\subset X^{f}}$ is a Whitney 
	stratification
	of $X \setminus X^{f}$ and for any pair of strata 
	$(V_{\alpha},V_{\beta})$ such that $V_{\alpha} \not\subset X^{f}$ and 
	$V_{\beta} \subset X^{f}$, the $(a_f)$-Thom condition is satisfied. We 
	call the strata included in $X^{f}$ the good strata.
\end{definition}

By \cite{GM}, given a stratification $\mathcal{S}$ of $X$, one can refine 
$\mathcal{S}$ to obtain a Whitney stratification $\mathcal{V}$ of $X$ which 
is adapted to $X^{f}$. Moreover, by \cite[Theorem $4.3.2$]{BMM} (see also \cite{P}), the refinement 
$\mathcal{V}$ satisfies the $(a_f)$-Thom condition. This means that good 
stratifications 
always exist. 


For instance, if $\mathcal{V}$ is a Whitney stratification of $X$ and $f : X \to 
\mathbb{C}$ has a stratified isolated
critical point, then the set
\[
\bigsetdef{V_{\alpha} \setminus X^{f}, \ \ V_{\alpha} \cap 
X^{f} \setminus \left\{0 \right\}, \ \ \left\{ 0\right\}}{\ \ V_{\alpha} \in 
\mathcal{V}}
\]
is a good stratification of $X$ relative to $f$. We call it the {\it good 
stratification induced by $f$}.


Durtertre and Grulha \cite{DG} defined the Brasselet number as follows.

\begin{definition}\label{BrasseletN}
	Let $\mathcal{V} = \left\{V_i 
	\right\}_{i=0}^{q}$ be a good stratification of $X$ relative
	to $f$. The \textbf{Brasselet number}, ${\rm B}_{f,X}(0)$, is defined by 
	\[
	{\rm B}_{f,X}(0) = \sum_{i 
	=1}^q\chi \big(V_i\cap B_{\varepsilon}(0) \cap f^{-1}(\delta) \big) \cdot
	{\rm Eu}_{X}(V_i),
	\] where $0< \left| \delta \right| \ll \varepsilon \ll 1$. In terms of weighted
Euler characteristics, this can be reformulated as
$${\rm B}_{f,X}(0)=\chi(X\cap f^{-1}(\delta)\cap B_{\varepsilon},{\rm Eu}_X) \ \ \ \text{for} \ \ \ 0<|\delta|\ll\varepsilon\ll1.$$
\end{definition}

Many results using Brasselet number use some auxiliary definitions which are presented in the sequence. Let  $g :(X, 0)\rightarrow(\mathbb{C},0)$ be a function-germ.

\begin{definition}
If $\mathcal{V}=\{V_{\lambda}\}$ is a stratification of $X$, the 
\textbf{symmetric relative polar variety of $f$ and $g$ with respect to 
$\mathcal{V}$}, $\tilde{\Gamma}_{f,g}(\mathcal{V})$, is the union 
$\cup_{\lambda}\tilde{\Gamma}_{f,g}(V_{\lambda})$, where 
$\tilde{\Gamma}_{f,g}(V_{\lambda})$ denotes the closure in $X$ of the critical locus 
of $(f,g)|_{V_{\lambda}\setminus (X^f\cup X^g)}$. 
\end{definition}

{Using these varieties, we can introduce the notion of tractability (following Massey \cite{MR1404920}).}

{\begin{definition}\label{definition tractable}
A function $g :(X, 0)\rightarrow(\mathbb{C},0)$ is \textbf{tractable at the
origin with respect to a good stratification $\mathcal{V}$ of $X$ relative to 
$f :(X, 0)\rightarrow(\mathbb{C},0)$} if the dimension of 
$\tilde{\Gamma}_{f,g}(\mathcal{V})$ is less or equal to 1 in a 
neighborhood of the origin and, for all strata 
$V_{\alpha}\subseteq X^f$,
$g|_{V_{\alpha}}$ has no critical points in a neighborhood of the origin 
except perhaps at the origin itself.
\end{definition}}

The following result shows that the Brasselet number satisfies a L\^e-Greuel 
type formula \cite[Theorem 4.4]{DG}. 

\begin{theorem}\label{Le Greuel}
	Suppose $X$ is pure $d$-dimensional and that $	\Sigma_{\mathcal{V}}g=\{0\}$. Then
	\[
	{\rm B}_{f,X}(0) - {\rm B}_{f,X^{g}}(0)  = (-1)^{d-1}m, 
	\]
	where $0 < \left|\delta\right| \ll \varepsilon \ll 1$ and $m$ is the number of stratified Morse critical points of a 
	morsefication of $g:X\cap f^{-1}(\delta)\cap 
	B_{\varepsilon}\rightarrow\mathbb{C}$ appearing on $X_{\rm reg}\cap 
	f^{-1}(\delta)\cap \{g\neq 0\}\cap B_{\varepsilon}$. In particular, this number does not
	depend on the morsefication.
\end{theorem}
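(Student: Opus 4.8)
The plan is to compute the difference ${\rm B}_{f,X}(0)-{\rm B}_{f,X^g}(0)$ directly from Definition \ref{BrasseletN}, organising everything around the strata of $\mathcal V$. Write $F:=X\cap B_\varepsilon\cap f^{-1}(\delta)$ and $V_i^{(\delta)}:=V_i\cap B_\varepsilon\cap f^{-1}(\delta)$, so that ${\rm B}_{f,X}(0)=\sum_{i=1}^q\chi\big(V_i^{(\delta)}\big)\,{\rm Eu}_X(V_i)$, with only the strata $V_i\not\subset X^f$ contributing (the good strata and $V_0=\{0\}$ give empty intersections with $f^{-1}(\delta)$). First I would fix a good stratification $\mathcal V'$ of $X^g$ relative to $f$ refining the partition of $X^g$ by the sets $V_i\cap X^g$: since $g$ is prepolar, $0$ is its only stratified critical point, so on $X^g\setminus\{0\}$ the hypersurface $g^{-1}(0)$ is transverse to every stratum of $\mathcal V$, and the Whitney plus $(a_f)$ refinement exists by \cite{GM,BMM} as recalled after Definition \ref{good stratification}. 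Because ${\rm Eu}_{X^g}$ is constructible, this refinement does not change ${\rm B}_{f,X^g}(0)=\sum_i\chi\big(V_i^{(\delta)}\cap g^{-1}(0)\big)\,{\rm Eu}_{X^g}(V_i\cap X^g)$.

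The second ingredient is the local comparison ${\rm Eu}_{X^g}(x)={\rm Eu}_X(x)$ for every $x\in X^g\setminus\{0\}$ close to $0$. For such an $x$ in a stratum $V_i$, the Thom--Mather structure makes $X$ locally a stratified product $V_i\times N_x$ with a normal slice $N_x$, and the local Euler obstruction at $x$ equals that of $N_x$ at the origin; prepolarity forces $g^{-1}(0)$ to be transverse to all strata near $x$ and compatible with this product, so $X^g$ is locally $(V_i\cap g^{-1}(0))\times N_x$ with the \emph{same} slice, whence ${\rm Eu}_{X^g}(x)={\rm Eu}_{N_x}(0)={\rm Eu}_X(x)$. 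In particular ${\rm Eu}_{X^g}\equiv 1$ on the top stratum $X_{\rm reg}\cap g^{-1}(0)$ of $X^g$. Substituting this and regrouping the strata of $\mathcal V'$ inside the $V_i\cap X^g$ turns the difference into
\[
{\rm B}_{f,X}(0)-{\rm B}_{f,X^g}(0)=\sum_{i=1}^q{\rm Eu}_X(V_i)\Big[\chi\big(V_i^{(\delta)}\big)-\chi\big(V_i^{(\delta)}\cap g^{-1}(0)\big)\Big].
\]

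Then I would evaluate each bracket by stratified Morse theory. Take a morsefication $\tilde g$ of $g$ on $F$ with its induced stratification. By the $(a_f)$-Thom condition together with prepolarity, $g$ — and $\tilde g$ for a small enough morsefication — has no stratified critical point on $\{g=0\}$ \cite[Proposition 1.12]{Ms1}, so $0$ is a stratified regular value, the critical values of $\tilde g$ all cluster near $0$, and all its critical points lie in $\{g\neq 0\}$; this same fact gives independence of the morsefication. On the top stratum $X_{\rm reg}^{(\delta)}$, a complex $(d-1)$-manifold, each critical point of $\tilde g$ is an ordinary complex Morse point whose local Milnor fibre is $S^{d-2}$, so passing its critical value attaches a real $(d-1)$-cell and changes $\chi$ by $(-1)^{d-1}$; hence $\chi\big(X_{\rm reg}^{(\delta)}\big)-\chi\big(X_{\rm reg}^{(\delta)}\cap g^{-1}(0)\big)=(-1)^{d-1}m$, which contributes $(-1)^{d-1}m$ to the sum since ${\rm Eu}_X(X_{\rm reg})=1$.

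The hard part — and the technical heart of the argument — is to prove that every stratum $V_i$ with $i<q$ contributes $0$, that is, that the critical points of $\tilde g$ lying on the lower-dimensional strata of $F$ cancel out in the difference. At such a point on a stratum of complex dimension $s<d$ the stratified Morse data factors as tangential data (a complex $A_1$ in $s$ variables) times normal data governed by the complex link of that stratum in $X$, and symmetrically in $X^g$; the transversality of $g^{-1}(0)$ coming from prepolarity should identify these two complex links, so the corresponding contributions to $\chi\big(V_i^{(\delta)}\big)$ and to $\chi\big(V_i^{(\delta)}\cap g^{-1}(0)\big)$ coincide. Making this identification rigorous — combining the $(a_f)$-Thom condition, the normal-slice interpretation of the Euler obstruction, and Massey's vanishing-cycle bookkeeping \cite{Ms1,Massey1} — is where the real work lies; I would either do it stratum by stratum as just sketched or, as a cleaner alternative, combine the Brasselet--Lê--Seade Lefschetz formula for ${\rm Eu}_X$ \cite{BLS} with the polar-multiplicity descriptions of \cite{LT} and of Theorem \ref{Euler obstruction of a function formula}, reducing the statement to the equality of the relative polar multiplicities of $X$ and of $X^g$ below the top degree. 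Once this is in place, only the top-stratum term survives and ${\rm B}_{f,X}(0)-{\rm B}_{f,X^g}(0)=(-1)^{d-1}m$.
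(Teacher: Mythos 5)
Your reduction of the difference to $\sum_{i}{\rm Eu}_X(V_i)\bigl[\chi(V_i^{(\delta)})-\chi(V_i^{(\delta)}\cap g^{-1}(0))\bigr]$, with $V_i^{(\delta)}=V_i\cap B_\varepsilon\cap f^{-1}(\delta)$, is fine: the transversality of $\{g=0\}$ to the strata away from $0$ (prepolarity plus Whitney) does give ${\rm Eu}_{X^g}(V_i\cap g^{-1}(0))={\rm Eu}_X(V_i)$, exactly as used later in this paper. The gap is in how you then evaluate the brackets stratum by stratum. By additivity of $\chi$ on complex constructible sets, the $i$-th bracket equals $\chi\bigl(V_i\cap f^{-1}(\delta)\cap\{g\neq0\}\cap B_\varepsilon\bigr)$, and neither of your two claims about these numbers is true: the top bracket need not equal $(-1)^{d-1}m$, and the lower brackets need not vanish. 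Concretely, take $X=\mathbb{C}\times\{y^2=x^3\}\subset\mathbb{C}^3$ with coordinates $(t,x,y)$, so $d=2$, with $f=t$ and $g=t+x$; then $g$ is prepolar, $m=0$, and indeed ${\rm B}_{f,X}(0)-{\rm B}_{f,X^g}(0)=2-2=0$, but the top-stratum bracket is $0-2=-2$ and the bracket of the one-dimensional stratum (the punctured $t$-axis, with ${\rm Eu}_X=2$ there) is $1$; only the ${\rm Eu}_X$-weighted combination $1\cdot(-2)+2\cdot 1$ vanishes. This also shows why your Morse count on the open top stratum fails: attaching the Morse data of critical points sitting on lower strata (and boundary effects on the non-proper open stratum) changes the Euler characteristic of the higher strata as well, so the ``hard part'' as you formulated it (each $V_i$, $i<q$, contributes $0$) is simply false and cannot be rescued by identifying complex links of $X$ and $X^g$.

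The correct mechanism --- this is the content of the Dutertre--Grulha proof of \cite[Theorem 4.4]{DG}, which the present paper quotes rather than reproves --- is a cancellation \emph{across} strata at each critical point of the morsefication $\tilde g$: at a stratified Morse point $p$ lying in a stratum $S$, the change of the ${\rm Eu}_X$-weighted Euler characteristic is, up to the sign $(-1)^{\dim_{\mathbb{C}}S}$, equal to ${\rm Eu}_X(p)-\sum_j\chi(V_j\cap\mathbb{L}_p)\,{\rm Eu}_X(V_j)$, where $\mathbb{L}_p$ is the complex link of $S$ in $X$ at $p$. By the Brasselet--L\^e--Seade/Dubson--Kashiwara identity (equivalently, ${\rm Eu}_{\ell,X}(p)=0$ for a generic covector $\ell$, the pointwise version of Proposition \ref{Eu_f and Morse points}), this vanishes for every stratum except the top one, where the complex link is empty and each Morse point contributes $(-1)^{d-1}$; summing over critical values gives the formula, and \cite[Proposition 1.12]{Ms1} gives independence of the morsefication, as you correctly note. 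Your closing remark about using \cite{BLS} and polar multiplicities gestures toward this, but it is not carried out, and the stratum-by-stratum skeleton you built would have to be replaced by this weighted, critical-point-by-critical-point cancellation for the proof to go through. (Your preliminary steps --- the good stratification of $X^g$, ${\rm Eu}_{X^g}={\rm Eu}_X$ on $X^g\setminus\{0\}$, and the absence of critical points on $\{g=0\}$ --- are correct.)
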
  

  This formula is due to a more general result \cite[Corollary 4.3]{DG}. And in Section $3$ we will use the concept of Newton-admissible family to present a version of this result, without any hypothesis on the
dimension of the singular set of f or g. 

In \cite[Theorem $3.2$]{Santana}, Santana considered the case where the 
function $g$ has a stratified singular set of dimension $1$ and generalized \cite[Theorema 4.4]{DG}. For that, we need the notion of partial morsefications introduced by Dutertre and Grulha in \cite{DG}.

\begin{definition}\label{partial morsefication}
A partial morsefication of $g:f^{-1}(\delta)\cap X\cap B_{\epsilon}\rightarrow\mathbb{C}$ is a function \linebreak$\tilde{g}: f^{-1}(\delta)\cap X\cap B_{\epsilon}\rightarrow\mathbb{C}$ (not necessarily holomorphic) which is a local morsefication of all isolated critical points of $g$ in $f^{-1}(\delta)\cap X\cap \{g\neq 0\}\cap B_{\epsilon}$ and which coincides with $g$ outside a small neighborhood of these critical points.
\end{definition}

With this definition, we enunciate Santana's result \cite[Theorem $3.2$]{Santana}.
\begin{theorem}
Suppose that $g$ is tractable at the origin with respect to  $\mathcal{V}$ 
relative to $f$. Then, for $0<|\delta|\ll\varepsilon\ll1$, 
\[
B_{f,X}(0)-B_{f,X^g}(0)-\sum_{j=1}^{r}m_{f,b_j}\cdot({\rm Eu}_X(b_j)-{\rm Eu}_{X^g}(b_j))=(-1)^{d-1}m,
\]
where $m$ is the number of stratified Morse critical points of a partial 
morsefication of $g:X\cap f^{-1}(\delta)\cap 
B_{\varepsilon}\rightarrow\mathbb{C}$ appearing on $X_{\rm reg}\cap 
f^{-1}(\delta)\cap \{g\neq 0\}\cap B_{\varepsilon}$. Here
$\Sigma_{\mathcal{V}}g=\{0\}\cup b_1\cup\ldots\cup b_r \subset X^g = X \cap g^{-1}(0)$ is a stratification of $\Sigma_{\mathcal{V}}g$ with $b_j$ a one-dimensional stratum contained in some $V_{\alpha}$ (or empty), and $m_{f,b_j}$ is
the multiplicity of $f|_{b_j}$. By taking the germ small enough, one
also has that the constructible functions ${\rm Eu}_X$ and ${\rm Eu}_{X^g}$ are
constant on all $b_j$ .
\end{theorem}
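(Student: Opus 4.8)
The plan is to reduce Santana's theorem to the L\^e--Greuel type formula of Theorem~\ref{Le Greuel} by ``peeling off'' the one-dimensional part of the stratified critical locus $\Sigma_{\mathcal{V}}g=\{0\}\cup b_1\cup\cdots\cup b_r$. First I would fix a good stratification $\mathcal{V}$ of $X$ relative to $f$ with respect to which $g$ is tractable, choose $0<|\delta|\ll\varepsilon\ll1$, and work on the Milnor fibre $F:=X\cap f^{-1}(\delta)\cap B_\varepsilon$. The tractability hypothesis says that $\dim\tilde\Gamma^1_{f,g}(\mathcal{V})\le1$ near the origin and that $g|_{V_\alpha}$ has no critical points near the origin for strata $V_\alpha\subseteq X^f$; the second clause is precisely what guarantees (via \cite[Proposition~1.12]{Ms1}) that on $F$ the function $g$ has no stratified critical points on $\{g=0\}$, so the relevant count of Morse points on $X_{\rm reg}\cap F\cap\{g\ne0\}$ is morsefication-independent. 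The key geometric point is that the stratified critical set of $g|_F$ consists of finitely many points: those coming from the curves $b_j$ meeting $F$ (each $b_j$ hits $f^{-1}(\delta)$ in $m_{f,b_j}$ points, counted with multiplicity of $f|_{b_j}$), together with the Morse points that a genuine morsefication of $g|_F$ would have on the top stratum.

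Next I would compute the Euler characteristic $\chi(F)$ in two ways using a Lefschetz-type/Morse-theoretic argument on $g|_F$. On one hand, sweeping out $F$ by the levels of $g$ and collapsing onto $F\cap g^{-1}(0)$: the ``vanishing'' contributions come from the finitely many stratified critical points of (a partial morsefication of) $g|_F$. The honest Morse points on $X_{\rm reg}\cap F$ each contribute $(-1)^{\dim F}=(-1)^{d-1}$ as usual, giving the term $(-1)^{d-1}m$. The critical points lying on the curves $b_j$ are not Morse; instead, by the local conical structure and the definition of the local Euler obstruction together with the Brasselet--L\^e--Seade formula, the contribution of the cluster of points $b_j\cap f^{-1}(\delta)$ to the difference $\chi(F)-\chi(F\cap g^{-1}(0))$ is measured by the local invariants of $X$ and $X^g$ at a generic point of $b_j$, weighted by $m_{f,b_j}=\mathrm{mult}(f|_{b_j})$. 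This is where the term $\sum_j m_{f,b_j}\cdot(\mathrm{Eu}_X(b_j)-\mathrm{Eu}_{X^g}(b_j))$ enters: $\mathrm{Eu}_X(b_j)$ is the local Euler obstruction of $X$ at a point of the stratum containing $b_j$, and near such a point the ``slice'' transverse to $b_j$ behaves like a function with isolated singularity, so Proposition~\ref{Eu_f and Morse points} and Theorem~\ref{Euler obstruction of a function formula} convert the local vanishing data into Euler obstructions.

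The bookkeeping is then: expand $\mathrm{B}_{f,X}(0)$ and $\mathrm{B}_{f,X^g}(0)$ via Definition~\ref{BrasseletN} along the strata of $\mathcal{V}$ and of the induced good stratification $\mathcal{V}'$ of $X^g$, subtract, and recognize that the strata not meeting the critical curves contribute exactly $(-1)^{d-1}m$ by the same argument as in the proof of Theorem~\ref{Le Greuel}, while the strata meeting the $b_j$'s contribute the correction sum. Concretely I would stratify-decompose
\[
\mathrm{B}_{f,X}(0)-\mathrm{B}_{f,X^g}(0)=\Big(\text{top-stratum Morse contribution}\Big)+\sum_{j=1}^{r}\Big(\text{contribution near }b_j\Big),
\]
identify the first bracket with $(-1)^{d-1}m$ and the $j$-th bracket with $m_{f,b_j}\cdot(\mathrm{Eu}_X(b_j)-\mathrm{Eu}_{X^g}(b_j))$, and rearrange to get the claimed identity.

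The main obstacle I anticipate is the local analysis along each curve $b_j$: one must show that, after a generic choice of $\delta$, the Milnor fibre $F$ meets $b_j$ transversally in exactly $m_{f,b_j}$ points, that near each such point $g|_F$ restricted to the relevant strata looks like a germ with isolated stratified singularity, and that its vanishing Euler characteristic is governed by $\mathrm{Eu}_X(b_j)-\mathrm{Eu}_{X^g}(b_j)$ rather than by some finer invariant. This requires combining the $(a_f)$-Thom condition (to control how $f^{-1}(\delta)$ meets the strata in $X^g$), the tractability of $g$, and a careful application of the additivity of Euler characteristic over the pieces of a Whitney stratification together with Theorem~\ref{Euler obstruction of a function formula}; getting the multiplicities and signs exactly right in this step is the delicate part.
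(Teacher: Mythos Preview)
The paper does not contain a proof of this statement: it is quoted as Santana's result \cite[Theorem~3.2]{Santana} and stated without argument, so there is no ``paper's own proof'' to compare against directly.

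That said, the paper \emph{does} prove an analogous result in the toric setting (Theorem~\ref{diferencadenumerosdebrasselet1}), and its method is quite different from your sketch. There the authors do not run a Morse-theoretic sweep of the Milnor fibre at all; instead they start from \cite[Corollary~4.3]{DG}, which already expresses $B_{f,X_\sigma}(0)-(-1)^{d-1}m$ as $\sum_{V_i\in\mathcal{V}_f}\chi(V_i\cap X_\sigma^g\cap f^{-1}(\delta)\cap B_\varepsilon)\cdot\mathrm{Eu}_{X_\sigma}(V_i)$, and then split this sum according to whether $V_i\subseteq\Sigma_{\mathcal{V}_f}g$ or not. For strata not in the critical locus they use transversality to replace $\mathrm{Eu}_{X_\sigma}(V_i)$ by $\mathrm{Eu}_{X_\sigma^g}(V_i^g)$; for strata inside $\Sigma_{\mathcal{V}_f}g$ the Euler obstructions do not match, and the discrepancy is exactly the correction term. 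Subtracting the analogous expansion of $B_{f,X_\sigma^g}(0)$ finishes the proof in a few lines. Your approach --- collapsing $F$ onto $F\cap g^{-1}(0)$ and reading off local contributions along the $b_j$ --- is morally the geometric content behind that corollary, but it is more laborious and the ``delicate part'' you flag (matching the local vanishing data at points of $b_j\cap f^{-1}(\delta)$ with $\mathrm{Eu}_X(b_j)-\mathrm{Eu}_{X^g}(b_j)$) is precisely what the Dutertre--Grulha machinery packages for free. If you want a proof in the spirit of the present paper, invoke \cite[Corollary~4.3]{DG} and do the bookkeeping on Euler obstructions stratum by stratum; the multiplicity $m_{f,b_j}$ then appears simply as $\chi(b_j\cap f^{-1}(\delta)\cap B_\varepsilon)$, with no separate transversality or local-model argument required.
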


\subsection{Non-degenerate locally tame complete intersection variety}

Let us present the definition of non-degenerate locally tame complete intersection variety and the necessary 
background in order to state our results. In this section, we follow the definitions and notations presented by Eyral and Oka \cite{EO2, Oka3}. 

Denote by $z := (z_1, \dots , z_n)$ the coordinates in $\mathbb{C}^n$, and by $f(z) = \sum_{\alpha} c_{\alpha}z^{\alpha}$ a non-constant polynomial function which vanishes at $0 \in \mathbb{C}^n$. The $n$-tuple
$\alpha := (\alpha_1, \dots, \alpha_n)$ is an integer vector, $c_{\alpha} \in \mathbb{C}$, and $z^{\alpha}$ denotes the monomial $z_1^{\alpha_1} \cdots z_n^{\alpha_n}$. Moreover, for any subset $I \subseteq \{1, \dots , n\}$, we have the following sets
$$ \mathbb{C}^I :=\{ (z_1, \dots , z_n) \in \mathbb{C}^n; \ \ z_i = 0 \ \ \text{if} \ \ i \notin I \},
$$
$$
{\mathbb{C}^{*}}^{I} := \{ (z_1, \dots , z_n) \in \mathbb{C}^n; \ \ z_i = 0 \ \  \text{if and only if}  \ \ i \notin I \}.
$$
When $I = \emptyset$, we have $\mathbb{C}^{\emptyset} = {\mathbb{C}^{*}}^{\emptyset} = \{0\}$. If $I =\{1, \dots,n\}$, we have ${\mathbb{C}^{*}}^{\{1,\dots,n\}} = (\mathbb{C}^{*})^{n}$, where ${\mathbb{C}^{*}}:= \mathbb{C}\setminus \{0\}$. 

 The Newton polyhedron of the germ $f: (\mathbb{C}^n,0) \to (\mathbb{C},0)$ is the convex hull in $\mathbb{R}^n_{+}$ (which denotes the positive orthant of $\mathbb{R}^n$) of the set 
 $$
 \bigcup_{c_{\alpha}\neq 0} (\alpha + \mathbb{R}^{n}_{+}),
 $$
and we denote it by $\Gamma_+(f)$. 

Given a non-zero weight vector $w := (w_1, \dots , w_n) \in \mathbb{ N}^n \setminus \{0\}$, we denote by $l_w$ the restriction to $\Gamma_+(f)$ of the linear map  $L:\mathbb{R}^n \to \mathbb{R}$ defined by
$$
L(x_1,\dots,x_n) = w_1 x_1 + \dots + w_n x_n.
$$

As $\Gamma_+(f) \subset \mathbb{R}^n_{+}$, the map $l_w$ has a minimal value, which we denote by $d(w; f)$. The minimum locus in $\Gamma_+(f)$, defined by
$$
\Delta(w;f) = \{ x \in \Gamma_+(f); \ \ l_w(x) = d(w; f)\},
$$
is a face of $\Gamma_+(f)$. The union of the
compact faces of $\Gamma_+(f)$ is called the Newton boundary  of $f$, and we will represent it by $\Gamma(f)$.

If $w_i > 0$ for all $i$, then $\Delta(w; f)$ is a compact face of $\Gamma(f)$. Moreover, the non-compact Newton boundary is the union of the usual Newton boundary $\Gamma(f)$ together with the \textbf{essential} non-compact faces of $\Gamma_+(f)$, that is, the
non-compact faces $\Delta(w; f)$ for which the restriction $f|_{\mathbb{C}^{I(w)}}$
identically
vanishes, where $I(w) := \{i \in \{1, \dots , n\}; \ \ w_i = 0\}$.

Lastly, denoting by $V_f$ the set of all subsets $I \subseteq \{1, \dots , n\}$ satisfying $f|_{\mathbb{C}^I} \equiv 0$, we say that $\mathbb{C}^I$
is a vanishing (respectively, a
non-vanishing) coordinate subspace for $f$, if $I \in V_f$ (respectively, if $I \notin V_f$ ).

In the following, we present the concept of non-degenerate complete intersection (see for instance \cite{Oka}).

\begin{definition}
Consider $k_0$ non-constant polynomial functions $f^1(z), \dots, f^{k_0}(z)$ which
all vanish at the origin. We say that the germ at $0$ of the variety $V(f^1, \dots, f^{k_0}) := \{z \in \mathbb{C}^n; \ \ f^1
(z) = \dots = f^{k_0}(z) = 0\}$
is a germ of a non-degenerate complete intersection variety if for any positive weight vector $w$, the toric variety
$$
{V^*} {(f^1_w, \dots , f^{k_0}_w ) }:= \{z \in (\mathbb{C}^*)^n; \ \ f^1_w(z) = \dots = f^{k_0}_w(z) = 0\}
$$
is a reduced, non-singular, complete intersection variety in $(\mathbb{C}^*)^n$. Here, $f_w^{i}$ denotes the face function of $f^i$ with respect to the weight vector
$w$, i.e. $f_w^{i} = f^i|_{\Delta(w; f^i)}$.
\end{definition}

Here we assume $\{i_1, \dots, i_m\} \subset \{1, \dots, n\}$ with $|\{i_1, \dots, i_m\}| = m$.
Notice that the variety ${V^*} {(f^1_w, \dots , f^{k_0}_w ) }
$ is globally defined in $(\mathbb{C}^*)^n$. 

\begin{remark} The class of non-degenerate singularity is open and dense when the {Newton
boundary} is fixed \cite{Oka}.
\end{remark}

Before presenting the important concept of locally tame complete intersection variety \cite{EO2, Oka3}, let us introduce a necessary notation. For any $u_{i_1}, \dots , u_{i_m} \in \mathbb{C}^*$, with $m \leq n$, let $(\mathbb{C}^*)^n(u_{i_1}, \dots , u_{i_m})$ denote the set of points $(z_1, \dots , z_n) \in (\mathbb{C}^*)^n$ satisfying $z_{i_j} = u_{i_j}$
for $1 \leq j \leq m$. Here we assume $\{i_1, \dots,i_m\} \subset \{1,\dots, n\}$ with $|\{i_1, \dots, i_m\}| = m$.

\begin{definition} The germ at $0$ of $V(f^1, \dots, f^{k_0})$  is called a germ of a
locally tame complete intersection variety if there is a number $R(f^1, \dots , f^{k_0}) >0$ such that for any non-empty subset $I := \{i_1, \dots , i_m\} \in V_{f^1} \cap \dots \cap V_{f^{K_0}}$, with $|\{i_1, \dots, i_m\}| = m$, any non-zero weight vector $w$ with $I(w) = I$, and any
non-zero complex numbers $u_{i_1}, \dots , u_{i_m}$ satisfying the inequality
$$
\sum_{j=1}^{m} | u_{i_j}|^2 < R(f^1, \dots , f^{k_0}),
$$
the toric variety
$$
{V^*} {(f^1_w, \dots , f^{k_0}_w ) } \cap (\mathbb{C}^*)^n (u_{i_1},\dots,u_{i_m})
$$
is a reduced, non-singular, complete intersection variety in $(\mathbb{C}^*)^n (u_{i_1},\dots,u_{i_m})$.
\end{definition}

A number $R(f^1, \dots , f^{k_0}) > 0$
satisfying the above definition is called a radius of local tameness of the functions $f^1, \dots , f^{k_0}$.

In \cite{EO, EO2}, Eyral and Oka used the objects described above to study the Whitney equisingularity of families of complete intersection varieties, not necessarily with isolated singularity. In the sequence, we are going to introduce some more notations and definitions in order to state Eyral and Oka's result concerning Whitney equisingularity. We use Eyral and Oka’s notation.

Let $(t, z) := (t, z_1, \dots ,z_n)$ be coordinates in $\mathbb{C} \times \mathbb{C}^n$, and for any
$k \in K_0 := \{1, \dots , k_0\}$, let $f^k
: \mathbb{C} \times \mathbb{C}^n \to \mathbb{C}$ be a non-constant polynomial
function satisfying $f^k(t, 0) = 0$, for all $t$. Define the product
$p(t, z) := f^1(t, z) \cdots f^{k_0}(t, z)$ and denote $p_t(z) := p(t, z)$ and $f^k_t(z) := f^k(t, z)$.

\begin{definition}\label{Newton} The family $\{p_t\}_t$ is called Newton-admissible if for any sufficiently small $t$, the following two conditions are satisfied:
\begin{itemize}
\item for any $k \in K_0$, the Newton boundary $\Gamma(f^k_t)$ does not depend on $t$;

\item for any $\{k_1, \dots , k_p\} \subseteq K_0$, the germ at $0$ of $V(f^{k_1}_t, \dots ,f^{k_p}_t)$ is a
germ of a non-degenerate, locally tame, complete intersection variety, and there exists a radius of local tameness $R(f^{k_1}_t, \dots , f^{k_p}_t)$ for the
corresponding functions $f^{k_1}_t, \dots , f^{k_p}_t$ which is greater than
some number $R>0$ independent of $t$ and of the choice of the
subset $\{k_1, \dots , k_p\}$.
\end{itemize}
\end{definition}

In particular, if $\{p_t\}_t$ is Newton-admissible, by \cite[ Lemma (2.8.2)]{Oka}, there is a neighborhood of the origin such that any subset given by 
\begin{align}\label{p3-thesubset}
\bigcap_{k\in K} V^{*I}(f^k)
\end{align}
is non-singular, in which 
\begin{align*}
V^{*I}(f^k):=V(f^k)\cap (\mathbb{C}\times \mathbb{C}^{*I}), 
\end{align*} $K\subseteq K_0$ and $I\subseteq \{1,\ldots,n\}$. Moreover, if for all $k\in K$, $f^k\vert_{\mathbb{C}\times\mathbb{C}^I} \not\equiv 0$, then the subset \eqref{p3-thesubset} is also a complete intersection variety. It follows that the collection $\mathcal{S}$ of all non-empty subsets of the form
\begin{align*}
S^I(K) & := \{(t,\mathbf{z})\in\mathbb{C}\times \mathbb{C}^{*I} \mid 
f^k(t,\mathbf{z})=0 \Leftrightarrow k\in K\}\\
& \ = \bigcap_{k\in K} V^{*I}(f^k) \bigg\backslash \bigcup_{k\in K_0\setminus K} V^{*I}(f^k)
\end{align*}
is a complex analytic stratification of $V(p)$. We call $\mathcal{S}$ the \emph{canonical toric stratification} of $V(p)$. Note that it includes $S^{\emptyset}(K_0)=\mathbb{C}\times\{\mathbf{0}\}$ (i.e., the $t$-axis) as a stratum.

In \cite{EO2}, Eyral and Oka proved the following.

\begin{theorem}(Eyral and Oka)\label{mt}
If the family $\{p_t\}_t$ is Newton-admissible, then the canonical toric stratification $\mathcal{S}$ of $V(p)$ is Whitney $(b)$-regular. In particular, the corresponding family of hypersurfaces $\{V(p_t)\}_t$ is Whitney equisingular. 
\end{theorem}

The special case of hypersurfaces can be found in \cite{EO}.

\begin{remark}\label{remarkstratification}If the family $\{p_t\}_t$ is Newton-admissible, from Theorem \ref{mt} we can conclude that the stratification $\{S^I(K_0)\}_{I\subseteq \{1,\ldots,n\}}$ of $V(f^1,\ldots,f^{k_0})$ is a Whitney $(b)$-regular stratification  with the $t$-axis as a stratum (see \cite[Remark $3.5$]{EO2}). Moreover, combining  \cite[Corollary $3.4$]{EO2} and \cite[Remark $3.5$]{EO2} the collection of subsets $\{S^I(K_0)\cap (\{t\}\times\mathbb{C}^n)\}_{I\subseteq \{1,\ldots,n\}}$ is a Whitney $(b)$-regular stratification of the complete intersection variety $V(f_t^1,\ldots,f_t^{k_0})$ in a neighborhood of the origin of $\mathbb{C}^n$, whose topology is independent of $t$, for sufficiently small $t$.
\end{remark}

\section{Induced good stratifications for non-degenerate locally tame complete intersections}

Let $(X,0)$ be an equidimensional complex analytic germ and let $f : (X,0) \to (\mathbb{C}, 0)$ be a holomorphic function-germ. In order to get a Santana’s type result, but without any hypothesis on the dimension of the singular set of $f$ or $g$, we have two tasks to do.  We start finding suitable good stratifications of the representatives of $X$ and of $X^g$ near the origin. In the sequence we prove a version of \cite[Lemma 4.1]{MR1404920} for not necessarily isolated singularities (Lemma \ref{Masseyversion}).


Now, using the notations of Section $2.3$, let $(t, z) := (t, z_1, \dots ,z_n)$ be coordinates in $\mathbb{C} \times \mathbb{C}^n$, and for any
$k \in K_0 := \{1, \dots , k_0\}$, let $f^k
: \mathbb{C} \times \mathbb{C}^n \to \mathbb{C}$ be a non-constant polynomial
function satisfying $f^k(t, 0) = 0$, for all $t$. Define the product 
$p(t, z) := f^1(t, z) \cdots f^{k_0}(t, z)$ and denote $p_t(z) := p(t, z)$ and $f^k_t(z) := f^k(t, z)$.

{From now on, let us denote by $X$ a sufficiently small representative of the germ of variety $(X,0)$, in which 
$$
  X^{f} := V(f^1,\ldots,f^{k_0 -1}) \subset X = V(f^1,\ldots,f^{k_0 -2}),
$$
where $f:=f^{k_0 -1}$ is a representative of the function-germ $f^{k_0 -1}:(X,0) \to (\mathbb{C},0)$}, $g:=f^{k_0}$ is a representative of the function-germ $f^{k_0}:(X,0) \to (\mathbb{C},0)$ and $k_0 \geq 3$.

\begin{example}
Let us consider $f^1: \mathbb{C} \times \mathbb{C}^4  \to \mathbb{C}$ given by 
    $$
     f^1(t, z_1,z_2,z_3,z_4) = z_1^2z_3^2 - z_2^3z_3^2 + z_3^2 z_4 + z_3^3 + tz_3^5,
    $$
then for each $t$ the function $f^1_t:  \mathbb{C}^4  \to \mathbb{C}$ is given by 
    $$
     f^1_t(z_1,z_2,z_3,z_4) = z_1^2z_3^2 - z_2^3z_3^2 + z_3^2 z_4 + z_3^3 + tz_3^5.
    $$

Since $f_t^1$ is non-degenerate for all values of $t$, by  \cite[ Lemma (2.8.2)]{Oka}
\begin{align}\label{example1}
\bigcap V(f^1_t)\cap (\{t\}\times \mathbb{C}^{*I})
\end{align}
is non-singular for all $t$, where $I\subseteq \{1,2,3,4\}$. Hence the collection $S^I(K_0^2)$ obtained with the above sets gives a stratification of $X_t := V(f^1_t)$, for all $t$, where $K_0^2 = \{1\}$.

For all $t$, the critical set of $f^1_t$ is the subspace                        $$\Sigma_{S^I(K_0^2)}                {f^1_t} = 
\{(z_1,z_2,0,z_4), \ \ z_1,z_2,           z_4 \in\mathbb{C}\}.$$
  
Now defining $f : = f^2: \mathbb{C} \times \mathbb{C}^4  \to \mathbb{C}$ by 
    $$
     f(t, z_1,z_2,z_3,z_4) = z_2^2 - z_3^3 -z_3^2 z_1^2 + 7z_3^2 z_4 + tz_3^7,
    $$
then for each $t$ the function $f_t:  \mathbb{C}^4  \to \mathbb{C}$ is given by 
    $$
     f_t(z_1,z_2,z_3,z_4) = z_2^2 - z_3^3 -z_3^2 z_1^2 + 7z_3^2 z_4 + tz_3^7.
    $$

The critical set of $f_t$ is the subspace $\Sigma {f_t} = \{(z_1,0,0,z_4), \ \ z_1, z_4 \in\mathbb{C}\}$ for all $t$ and we can see that $\Sigma {f_t} \subset \Sigma_{S^I(K_0^2)} {f^1_t} \subset X_t $. Moreover, taking $I_{f_t}=\{1,4\} \subset \{1,2,3,4\}$, then $X_t \cap {\mathbb{C}^{*}}^ {I_{f_t}} =  {\mathbb{C}^{*}}^ {I_{f_t}} = \mathbb{C}^* \times 0 \times 0 \times \mathbb{C}^* \subset \Sigma {f_t}$. Therefore, ${f_t}|_{X_t}$ has a singular set with dimension $2$ at least. Also
\begin{align}\label{example2}
\Big(\bigcap_{k\in K_0^1} V(f^k_t) \Big)\cap (\{t\}\times \mathbb{C}^{*I})
\end{align}
is non-singular, where $K_0^1 = \{1,2\}$ and $I\subseteq \{1,2,3,4\}$. Hence the collection $S^I(K_0^1)$ obtained with the above sets gives a stratification of $X_t^{f_t} := V(f^1_t, f_t)$, for all $t$.

    Let  $g : = f^3: \mathbb{C} \times \mathbb{C}^4  \to \mathbb{C}$ be the function given by $$g(t,z_1,z_2,z_3,z_4)=z_2^2-z_3^2z_1+z_4^3+tz_3^9.$$ For each $t$, the function $g_t:  \mathbb{C}^4  \to \mathbb{C}$ is given by 
    $$
     g_t(z_1,z_2,z_3,z_4) = z_2^2-z_3^2z_1+z_4^3+tz_3^9.
    $$
 Since $\Sigma_{S^I(K_0^2)} f_t^1=\{(z_1,z_2,0,z_4), z_1,z_2,z_4\in\mathbb{C}\}$, we have  $g_t|_{\Sigma_{S^I(K_0^2)} f_t^1}=z_2^2+z_4^3$ and $$\Sigma g_t|_{\Sigma_{S^I(K_0^2)} f^1_t}=\{(z_1,0,0,0),z_1\in\mathbb{C}\}\subset X_t.$$
 
 Taking $I_{g_t}=\{1\},$ we have $X_t \cap {\mathbb{C}^{*}}^ {I_{g_t}} =  {\mathbb{C}^{*}}^ {I_{g_t}} = \mathbb{C}^* \times 0 \times 0 \times 0 \subset \Sigma_{S^I(K_0^2)} {f_t^1}$. 
 Hence, $g_t|_{X_t}$ has singular set of dimension at least $1$.

    Moreover, since $\Sigma f_t\subset \Sigma_{S^I(K_0^2)} f^1_t$, it is sufficient to analyse $g_t|_{\Sigma f_t}$ in order to estimate the dimension of $\Sigma_{S^I(K_0^1)} g_t|_{X_t^{f_t}}.$ We have $g_t|_{\Sigma f_t}=z_4^3$ and $$\Sigma g_t|_{\Sigma f_t}=\{(z_1,0,0,0), z_1\in\mathbb{C}\}.$$ Then $\mathbb{C}^{*{I_{g_t}}} \subset\Sigma_{S^I(K_0^1)} g_t|_{X_t^{f_t}}$.
    
\end{example}


Before we continue, let us denote by $K_0^2:={K}_0 \setminus \{ k_0-1, k_0\}= \{1, \dots , k_0 -2\}$, and by 
$p^2(t, z) := f^1(t, z) \cdots f^{k_0 -2}(t, z)$. Then, using the concept of Newton-admissible family we have the following.





\begin{lemma}\label{any dimensional stratification lemma}
If the family $\{p_t^2\}_t$ is Newton-admissible, then the collection of sets 
\begin{equation*}
\mathcal{V}_f=\left\{S^I(K_0^2) \setminus\{f=0\}, \ \ 
S^I(K_0^2) \cap\{f=0\} \right\} _{I\subseteq \{1,\ldots,n\}}
\end{equation*} 
is a good stratification of $X$ relative to $f$.
\end{lemma}

\begin{proof} 
By Theorem \ref{mt} (see also Remark \ref{remarkstratification}), the set  $\{S^I(K_0^2)\}_{I\subseteq \{1,\ldots,n\}}$ is a Whitney stratification of $X$, therefore  by Theorem p. $99$ of \cite{P} or \cite[Theorem $4.3.2$]{BMM} (see also \cite{Massey1}), we have that $\mathcal{V}_f$ is a good stratification of $X$ relative to $f$.
\end{proof}





Now we aim to construct the appropriate stratification for $X^g,$ in which
$$
  (X^{g})^f := V(f^1,\ldots,f^{k_0 -2}, f^{k_0 -1}, f^{k_0}) \subset X^{g} := V(f^1,\ldots,f^{k_0 -2}, f^{k_0}) \subset X.
$$

As above, let us denote by $K_0^1:={K}_0 \setminus \{k_0 -1\}= \{1, \dots , k_0 -2, k_0\}$, and by 
$p^1(t, z) := f^1(t, z) \cdots f^{k_0 -2}(t,z) \cdot f^{k_0}(t, z)$. Then, using the concept of Newton-admissible family we have the following.





\begin{lemma}\label{any dimensional stratification lemma3}
If the family $\{p_t^1\}_t$ is Newton-admissible, then the collection of sets 
\begin{equation*}
\mathcal{V}^g_f=\left\{S^I(K_0^1) \setminus\{f=0\}, \ \ 
S^I(K_0^1) \cap\{f=0\} \right\} _{I\subseteq \{1,\ldots,n\}}
\end{equation*} 
is a good stratification of $X^g$ relative to $f.$
\end{lemma}

\begin{proof} 
The proof is exactly analogous to the proof of Lemma \ref{any dimensional stratification lemma}.
\end{proof}
 
\begin{remark}\label{remark34} Similarly, considering
$$
  X^{f}_t := V(f^1_t,\ldots,f^{k_0 -1}_t) \subset X_t = V(f^1_t,\ldots,f^{k_0 -2}_t),
$$
$$
  (X^{g})^f_t := V(f^1_t,\ldots,f^{k_0 -2}_t, f^{k_0 -1}_t, f^{k_0}_t) \subset X^{g}_t := V(f^1_t,\ldots,f^{k_0 -2}_t, f^{k_0}_t) \subset X_t,
$$
and applying \cite[Corollary $3.4$]{EO2} we can prove that if the family $\{p_t^2\}_t$ is Newton-admissible, then the collection of sets 
\begin{equation*}\label{element}
\mathcal{V}_{f_t}=\left\{S^I(K_0^2) \cap (\{t\}\times\mathbb{C}^n)\setminus\{f_t=0\}, \ \ 
S^I(K_0^2) \cap (\{t\}\times\mathbb{C}^n) \cap\{f_t=0\} \right\} _{I\subseteq \{1,\ldots,n\}}
\end{equation*} 
is a good stratification of $X_t$ relative to $f_t$, for any sufficiently small $t$. And if the family $\{p_t^1\}_t$ is Newton-admissible, then the collection of sets 
\begin{equation*}
\mathcal{V}^{g_t}_{f_t}=\left\{S^I(K_0^1) \cap (\{t\}\times\mathbb{C}^n)\setminus\{f_t=0\}, \ \ 
S^I(K_0^1) \cap (\{t\}\times\mathbb{C}^n) \cap\{f_t=0\} \right\} _{I\subseteq \{1,\ldots,n\}}
\end{equation*} 
is a good stratification of $X_t^{g_t}$ relative to $f_t$, for any sufficiently small $t$.
\end{remark}

Before proving our main result, we present the following lemma.

\begin{lemma}\label{dimensionsing}
Let $X_t \subset \mathbb{C}^n$ be the germ
given by $V(f_t^1, \dots, f_t^{k_0 -2})$ and $f_t=f_t^{k_0 -1}, g_t=f_t^{k_0}$ non-constant polynomial functions on $X_t$. If the family $\{p_t\}_t$  is Newton-admissible, then the symmetric relative polar variety $\tilde{\Gamma}_{f_t,g_t}(\mathcal{V}_{f_t })$ has dimension less or equal to one, for all $t$ small enough. 
\end{lemma}

\begin{proof} Since the family $\{p_t\}_t$ is Newton-admissible, by \cite[ Lemma (2.8.2)]{Oka}, there is a neighborhood of the origin such that any subset given by 
\begin{align}\label{p3-thesubset2}
V^K_t := \bigcap_{k\in K} V(f^k_t)\cap (\{t\} \times \mathbb{C}^{*I})
\end{align}
is non-singular, where $K\subseteq K_0$ and $I\subseteq \{1,\ldots,n\}$. Moreover, if for all $k\in K$, $f^k_t \vert_{\{t\}\times\mathbb{C}^I} \not\equiv 0$, the subset \eqref{p3-thesubset2} is also a complete intersection variety. Then, if all $k\in K$, $f^k_t\vert_{\{t\}\times\mathbb{C}^I} \not\equiv 0$, the $k$-form 
\begin{equation*}
df^1_t \wedge \cdots\wedge df^{k}_t
\end{equation*}
is nowhere vanishing in  $V^K_t $.

Now, for all $I \subset \{1, \dots, n\}$ such that $f_t\vert_{\{t\}\times\mathbb{C}^I} \equiv 0$, then $\{t\}\times\mathbb{C}^I \subset X^f_t$ (analogously, if $g_t\vert_{\{t\}\times\mathbb{C}^I} \equiv 0$, then $\{t\}\times\mathbb{C}^I \subset X^g_t$). Therefore, the critical locus 
of 
$$(f_t,g_t)|_{V^{I}_t\setminus (X_t^{f_t}\cup X_t^{g_t})}
$$
is empty, in which ${V^{I}_t}$ is a stratum of $\mathcal{V}_{f_t}$. 
\end{proof}




In \cite{DG} Dutertre and Grulha Jr. applied \cite[Theorem 4.2 (A)]{MR1404920} to prove \cite[Corollary 4.3]{DG}.   On the other hand, the essential step in Massey's proof of \cite[Theorem 4.2 (A)]{MR1404920} is \cite[Lemma 4.1]{MR1404920}, which also holds true in our setting. 

\begin{lemma}\label{Masseyversion}
Let $X_t \subset \mathbb{C}^n$ be the germ
given by $V(f_t^1, \dots, f_t^{k_0 -2})$ and $f_t=f_t^{k_0 -1}, g_t=f_t^{k_0}$ non-constant polynomial functions on $X_t$. If the family $\{p_t\}_t$  is Newton-admissible, then, for $\epsilon$ small and nonzero, and $0<\eta\ll\epsilon,$ we may use neighborhoods of the form $B_{\epsilon}\cap g_t^{-1}(\mathbb{D}_{\eta}^{\circ})$ to define the Milnor fibre of $f_t$, where $\mathbb{D}_{\eta}^{\circ}$ denotes the interior of a closed ball centered at the origin and with radius $0<\eta<<1.$
\end{lemma}
\noindent\textbf{Proof.} Following \cite[Lemma 4.1]{MR1404920}, we must show that for $0<\nu,\eta<<a<\epsilon$, the map $$\psi:=(|z|^2,g_t,f_t):X_t\cap \psi^{-1}((a,\epsilon)\times\mathbb{D}^{\circ}_{\eta}\times \mathbb{D}^{\circ}_{\nu})\rightarrow (a,\epsilon)\times\mathbb{D}^{\circ}_{\eta}\times \mathbb{D}^{\circ}_{\nu}$$
 is a proper, stratified submersion.
 
 Then,  using \cite[Proposition 5.2]{EO2}, there exists $\epsilon >0$ such that for all $\epsilon^{\prime}$, $0<\epsilon^{\prime}\leqslant\epsilon,\partial B_{\epsilon^{\prime}}$ intersects transversely the strata $\{g_t^{-1}(0)\cap W_{\beta}, W_{\beta}\subseteq V(f_t)\}$ for all $W_{\beta}\in\mathcal{V}_{f_t}$. 
 
Note that in our context
the $g_t^{-1}(0)\cap W_{\beta}$ are indeed strata of the corresponding stratification. By Lemma \ref{dimensionsing}, the symmetric relative polar variety is at most
one-dimensional. Hence we may choose $\epsilon$ so small that $B_{\epsilon}\cap\tilde{\Gamma}_{f_t,g_t}\cap V(g_t)\subseteq\{0\}$, and so, for any $a$ such that $0<a<\epsilon,$ for all $\eta$ sufficiently small, $$B_{\epsilon}\cap\Gamma_{f_t,g_t}\cap g_t^{-1}(\mathbb{D}^{\circ}_{\eta})\subseteq B_a,$$

 or yet,

 $$\psi^{-1}((a,\epsilon)\times \mathbb{D}^{\circ}_{\eta}\times\mathbb{C})\cap\Gamma_{f_t,g_t}=\emptyset.$$

Now, let $\tilde{g_t}$ be an extension of $g_t$ to an open neighborhood of the origin in $\mathbb{C}^N$, and suppose no matter how small we pick $\eta$ and $\nu$, the map $\psi$ still has critical points. Then there would exist a stratum $W_{\alpha}\nsubseteq V(f_t)$ and a sequence of points $p_i$ in $W_{\alpha}\setminus (V(f_t)\cup V(g_t))$ such that $p_i$ converges to some point $p$ in $V(f_t)\cap V(g_t)-\partial B_a$ and such that 
\begin{eqnarray}\label{sequence1}
    T_{p_i}V(f_t|_{W_{\alpha}}-f(p_i))\cap T_{p_i}V(\tilde{g_t}-\tilde{g_t}(p_i))\subseteq T_{p_i}\partial B_{|p|}.
\end{eqnarray}

Let $W_{\beta}\subseteq V(f_t)$ be the stratum containing $p.$ We may assume that $T_{p_i}V(\tilde{g_t}-\tilde{g_t}(p_i))$ converges to some $\mathcal{I}$ and that  $T_{p_i}V(f_t|_{W_{\alpha}}-f_t(p_i))$ converges to some $\mathcal{T}$. By the good condition, $T_p W_{\beta}\subset \mathcal{T}.$

Since $p_i$ converges to $p$, from \ref{sequence1}, we obtain $\mathcal{T}\cap \mathcal{I}\subset T_p(\partial B_{|p|}).$ Now, since $g_t^{-1}(0)\cap W_{\beta}$ is transversal to $\partial B_{|p|},$ we have \begin{eqnarray}\label{limit1}
    T_p(g_t^{-1}(0)\cap W_{\beta})+ T_p\partial B_{|p|}=\mathbb{C}^N.
\end{eqnarray}

Keeping $f_t=f^{k_0-1}$ and $g_t=f^{k_0}$, let $\tilde{K_0^1}=K_0\setminus\{k_0\}$. Similar to Lemma \ref{any dimensional stratification lemma3}, the collection $$\{S^I(\tilde{K_0^1})\setminus\{g_t=0\}, S^I(\tilde{K_0^1})\cap\{g_t=0\}\}_{I\subseteq \{1,\ldots, n\}}$$ 
is a good stratification of $X_t^{f_t}$ relative to $g_t.$

Moreover, a stratum $S^I(\tilde{K_0^1})\cap\{g_t=0\}$ is precisely of type $S^I(\tilde{K_0^2})\cap\{f_t=0\}$ of stratification $\mathcal{V}_{f_t}$ from Lemma \ref{any dimensional stratification lemma}. 

Notice that $g_t^{-1}(0)\cap W_{\beta}$ is also a stratum of type $S^I(\tilde{K_0^1})\cap\{g_t=0\}$, hence by the good condition $T_p(g_t^{-1}(0)\cap W_{\beta})\subset \mathcal{I}.$

Since $g_t^{-1}(0)\cap W_{\beta}\subset W_{\beta}$, $T_p(g_t^{-1}(0)\cap W_{\beta})\subset T_p(W_{\beta}).$ From $T_p(g_t^{-1}(0)\cap W_{\beta})\subset \mathcal{I},$ it follows that $$T_p(g_t^{-1}(0)\cap W_{\beta})\subset \mathcal{I}\cap T_p(W_{\beta})\subset \mathcal{I}\cap\mathcal{T}\subset T_p(\partial B_{|p|}).$$

This contradicts the transversality obtained in \ref{limit1}.  \qed

Hence, we have the following (just as in \cite[Corollary $4.3$]{DG}):

 \begin{proposition}\label{Corolario 4.3 para variedades singulares}
	Let $X_t \subset \mathbb{C}^n$ be the pure $d$-dimensional germ
given by $V(f_t^1, \dots, f_t^{k_0 -2})$ and $f_t=f_t^{k_0 -1}, g_t=f_t^{k_0}$ non-constant polynomial functions on $X_t$. If the family $\{p_t\}_t$  is Newton-admissible then, for $0 < \left| \delta \right| 
	\ll \varepsilon \ll 1$, 
	\[
	 B_{f_t, X_{t}} (0) -  \sum_{V_i \subset{\mathcal {V}_{f_t}}}  \chi 
\big(V_i \cap X_{t}^{g_t} \cap f_t^{-1}(\delta) \cap B_{\varepsilon} \big) \cdot 
{\rm{Eu_{{X_{t}}}}}(V_i )    =   (-1)^{d-1}m_t,
	\]
 
	\noindent where $0 < \left|\delta\right| \ll \varepsilon \ll 1$ and $m_t$ is the number of stratified Morse critical points of a
	morsefication of $g_t:X_t\cap f_t^{-1}(\delta)\cap 
	B_{\varepsilon}\rightarrow\mathbb{C}$ appearing on $({X_t})_{\rm reg}\cap 
	f_t^{-1}(\delta)\cap \{g_t\neq 0\}\cap B_{\varepsilon}$. In particular, this number does not
	depend on the morsefication. In terms of weighted Euler characteristics
this can be reformulated (for $0<|\delta|\ll\varepsilon\ll1$) as
$$B_{f_t,X_t}(0) - \chi(X_t^{g_t} \cap f_t^{-1}(\delta)\cap B_{\varepsilon},{\rm Eu}_{X_t}) = (-1)^{d-1}m_t.$$

\end{proposition}

Last proposition leads us to our main result in this section. Notice that the strata of $\mathcal{V}^{g_t}_{f_t}$ are the strata of $\mathcal{V}_{f_t}$ intersected by $\{g_t=0\}.$ Let us denote by $V_i^{g_t}$ the strata of $\mathcal{V}_{f_t}$ which intersect the critical set $\Sigma_{\mathcal {V}_{f_t}}g_{t}, i\in\{1,\ldots, q_t\}.$

\begin{theorem}\label{diferencadenumerosdebrasselet1} 	
	Let $X_t \subset \mathbb{C}^n$ be the germ
given by $V(f_t^1, \dots, f_t^{k_0 -2})$ and $f_t=f_t^{k_0 -1}, g_t=f_t^{k_0}$ non-constant polynomial functions on $X_t$. If the family $\{p_t\}_t$  is Newton-admissible then, for $0 < \left| \delta \right| 
	\ll \varepsilon \ll 1$,
\begin{equation}\label{morsepoints}   B_{f_t,X_t}(0)-B_{f_t,X_t^{g_t}}(0)- \displaystyle 
 \sum_{i=1}^{q_t}\chi \big(V_i^{g_t} \cap f^{-1}_t(\delta) \cap B_{\varepsilon} 
\big)   \big(\rm{Eu_{X_t^{g_t}}}(V_i^{g_t}) - 
 \rm{Eu_{X_t}}(V_i^{g_t})  \big)=(-1)^{d-1}m_t
\end{equation}
where $m_t$ is the number of stratified Morse critical points of a partial 
morsefication of $g_t: X_t\cap f_t^{-1}(\delta) \cap B_{\varepsilon} \to 
\mathbb{C}$ appearing on $({X_t})_{\rm reg} \cap f_t^{-1}(\delta) \cap 
 \{g_t 
\neq 0\} \cap B_{\varepsilon}$. In terms of weighted Euler characteristics
this can be reformulated (for $0<|\delta|\ll\varepsilon\ll1$) as
\begin{equation*}
 B_{f_t,X_t}(0)- B_{f_t,X_t^{g_t}}(0) -\chi(X_t^{g_t} \cap f_t^{-1}(\delta)\cap B_{\varepsilon}, {\rm Eu}_{X_t^{g_t}}-{\rm Eu}_{X_t})=(-1)^{d-1}m_t.
\end{equation*}
Note that the support of the constructible function ${\rm Eu}_{X_t^{g_t}} - {\rm Eu}_{X_t}|X_t^{g_t}$ is a union of strata of $\Sigma_{\mathcal{V}_{f_t}} g_t$ contained in $\{g_t =0\}$,
given by our $V_i^{g_t}$ for $i = 1,\dots,q_t$.
\end{theorem}
\begin{proof}
Since the family $\{p_t\}_t$ 
is Newton-admissible and 
$$p(t, z) := f_t^1(t, z) \cdots f_t^{k_0}(t, z),$$
by Definition \ref{Newton} the families $\{p_t^2\}_t$ and $\{p_t^1\}_t$ are also Newton-admissible. Therefore, by Remark \ref{element} the collection  $\mathcal{V}_{f_t}$ is a good stratification of $X_t$, and the collection $\mathcal{V}^{g_t}_{f_t}$ is a good stratification of $X^{g_t}$. Moreover, by Lemma \ref{dimensionsing} and  \cite[Proposition $5.2$]{EO2}, we may apply Lemma \ref{Masseyversion}.

Now, applying Proposition \ref{Corolario 4.3 para variedades singulares}, we have
\begin{equation*}
 B_{f_t, X_t} (0) -  \sum_{V_i \subset{\mathcal {V}_{f_t}}}  \chi 
\big(V_i \cap X_t^{g_t} \cap f_t^{-1}(\delta) \cap B_{\varepsilon} \big) \cdot 
{\rm{Eu_{{X_t}}}}(V_i )    =   (-1)^{d-1}m_t,
\end{equation*}
in which $ 0 < \left| \delta \right| \ll \varepsilon 
\ll 1$.

If $V_i \not\subseteq \Sigma_{\mathcal {V}_{f_t}}g_t$, $V_i$ intersects 
$g_t^{-1}(0)$ transversely, hence ${\rm Eu}_{X_t}(V_i) = {\rm Eu}_{X_t^{g}}(V_i 
\cap g_t^{-1}(0))$. Then
\begin{equation*}
\begin{aligned}
\sum_{V_i \subset{\mathcal {V}_{f_t}}}  \chi 
\big(V_i \cap X_t^{g_t} \cap f_t^{-1}(\delta) \cap B_{\varepsilon} \big) \cdot 
{\rm{Eu_{{X_t}}}}(V_i ) &  =  \sum_{V_i \not\subseteq \Sigma_{\mathcal {V}_{f_t}}g_t}  \chi \big(V_i^{g_t} 
 \cap f_t^{-1}(\delta) \cap B_{\varepsilon} \big)\cdot 
 {\rm{Eu_{{X_t^g}}}}(V_i^{g_t}) \\ 
& + \sum_{l=1}^{q_t} \chi \big(V_l^{g_t} \cap f_t^{-1}(\delta) \cap B_{\varepsilon} 
\big)  \cdot {\rm Eu}_{X_t}(V_{l}^{g_t}),
\end{aligned}
\end{equation*}
where $V_i^{g_t}$ equals $V_i \cap g_t^{-1}(0)$ and $V_l^{g_t}$ denotes the strata of $\mathcal{V}_{f_t}$ which intersect $\Sigma_{\mathcal {V}_{f_t}}g_t$.

On the other hand, since the strata of $\mathcal{V}_{g_t}$ are the strata of $\mathcal{V}_{f_t}$ intersected by $\{g_t=0\},$
\begin{equation*}
\begin{aligned}
B_{f_t, X_t^{g_t}} (0)
&  =  \sum_{V_i \not\subseteq \Sigma_{\mathcal 
{V}_{f_t}}g_t}  \chi \big(V_i^{g_t} \cap f_t^{-1}(\delta) \cap B_{\varepsilon} 
\big)\cdot {\rm{Eu_{{X_t^g}}}}(V_i^{g_t}) \\ 
& + \sum_{l=1}^{q_t} \chi \big(V_l^{g_t} \cap f_t^{-1}(\delta) \cap B_{\varepsilon} 
\big)  \cdot {\rm Eu}_{X_t^{g_t}}(V_{l}^{g_t}).
\end{aligned}
\end{equation*}

Therefore,
\begin{eqnarray*}
 \sum_{V_i \subset{\mathcal {V}_{f_t}}}  \chi 
\big(V_i \cap X_t^{g_t} \cap f_t^{-1}(\delta) \cap B_{\varepsilon} \big) \cdot 
{\rm{Eu_{{X_t}}}}(V_i )& =& B_{f_t, X_t^{g_t}} (0) - \\
&-& \displaystyle 
 \sum_{l=1}^{r_t}\chi \big(V_l^{g_t} \cap f_t^{-1}(\delta) \cap B_{\varepsilon} 
\big) \cdot \rm{Eu_{X_t^{g_t}}}(V_l^{g_t})\\
&+& \sum_{l=1}^{q_t} \chi \big(V_l^{g_t} \cap f_t^{-1}(\delta) \cap B_{\varepsilon} 
\big)  \cdot {\rm Eu}_{X_t}(V_{l}^{g_t}).
\end{eqnarray*}
\end{proof}

Given $X \subset \mathbb{C}^n$ a Newton non-degenerate complete intersection and a generic linear form  with respect to $X$, $h: (\mathbb{C}^m,0) \to (\mathbb{C},0)$, the restriction of $h$ to $X$ may be degenerate if we eliminate one variable using $h=0$ (see \cite[Example (I-2)]{Oka}).
However, the variety $X \cap h^{-1}(0) \subset \mathbb{C}^n$ is Newton non-degenerate. Then, we have the following.

\begin{corollary}\label{corollary39}

Let $X_t \subset \mathbb{C}^n$ be the germ
given by $V(f_t^1, \dots, f_t^{k_0 -2})$ and $f_t=f_t^{k_0 -1}$ and $ g_t=f_t^{k_0}$ be non-constant polynomial functions on $X_t$. If the family $\{p_t^1\}_t$  is Newton-admissible and $f_t$ is a generic linear form with respect to $X_t$ then, for $0 < \left| \delta \right| 
	\ll \varepsilon \ll 1$,
\begin{equation}\label{morsepoints2}
   {\rm Eu}_{X_t}(0)-{\rm Eu}_{X_t^{g_t}}(0)- \displaystyle 
 \sum_{i=l}^{q_t}\chi \big(V_l^{g_t} \cap f^{-1}_t(\delta) \cap B_{\varepsilon} 
\big)   \big(\rm{Eu_{X_t^{g_t}}}(V_l^{g_t}) - 
 \rm{Eu_{X_t}}(V_l^{g_t})  \big)=(-1)^{d-1}m_t
\end{equation}
where $m_t$ is the number of stratified Morse critical points of a partial 
morsefication of $g_t: X_t\cap f_t^{-1}(\delta) \cap B_{\varepsilon} \to 
\mathbb{C}$ appearing on $({X_t})_{\rm reg} \cap f_t^{-1}(\delta) \cap 
 \{g_t 
\neq 0\} \cap B_{\varepsilon}$. In terms of weighted Euler characteristics
this can be reformulated (for $0<|\delta|\ll\varepsilon\ll1$) as
\begin{equation*}
 {\rm Eu}_{X_t}(0)- {\rm Eu}_{X_t^{g_t}}(0) -\chi(X_t^{g_t} \cap f_t^{-1}(\delta)\cap B_{\varepsilon}, {\rm Eu}_{X_t^{g_t}}-{\rm Eu}_{X_t})=(-1)^{d-1}m_t.
\end{equation*}
\end{corollary}

\begin{proof}
    Firstly, for a sufficiently generic function $f_t$, the set $V_{f_t}$ of all subsets $I \subseteq \{1, \dots , n\}$ satisfying $f_{t}|_{\mathbb{C}^I} \equiv 0$ is the empty set, i.e., $V_{f_t}= \emptyset$, since $\mathbb{C}^{\emptyset}$ is the origin of $\mathbb{C}^n$. Moreover, the construction presented in \cite[Section $6.3$]{EO2} guarantees that, for $I \notin V_{f_t},$ the Whitney's conditions depend only on the non-degeneracy of $f_t$. Then, since $\{p^1_t\}_t$ is Newton-admissible, we have that $\{p_t\}_t$ is Newton-admissible.
    
    Now, since $f_t$ is generic, the symmetric relative polar variety $\tilde{\Gamma}_{f_t,g_t}(\mathcal{V}_{f_t })$ has dimension less or equal to one. Moreover, by \cite[Proposition $5.2$]{EO2}, we may apply Lemma \ref{Masseyversion}.
\end{proof}

 We remark that the assumptions of non-degeneracy and
uniform local tameness are elementary algebraic conditions which can often
be checked using computational methods. Moreover, as we are going to see in the next section, using Matsui and Takeuchi results \cite{MT1} we can present formulae to compute the objects which appear on the left side of Equation (\ref{morsepoints}). Hence, we provide an algebraic approach to compute the number of Morse critical points (which are
geometric objects) .

\section{Euler obstruction, Morse points and torus action} 


In this section, we compute the local Euler obstruction and the Brasselet number of complete intersections varieties given by Newton-admissible families. As we said before, we use Matsui and Takeuchi results to state such formulae \cite{MT1}. 

We start presenting the definitions and notations from \cite{MT1} adapted to the special case in which  $X$ is a non-degenerate complete intersection in $\mathbb{C}^n$ (see also \cite{Oka}). However, we notice that Matsui and Takeuchi results hold for non-degenerate complete intersection in any affine toric variety. 

 Let $\mathbb{R}^n_{+}$ be the positive orthant of $\mathbb{R}^n$ and consider the following subvarieties 
$$X^f:= \{f^1=\dots=f^{k-2} = f^{k-1} =0 \} \subset X:= 
\{f^1=\dots=f^{k-2} =0 \}.$$ Assume that $0 \in X^f$. Since $\mathbb
C^n$ is the toric variety associated to the polyhedron cone generated by the canonical base of $\mathbb{R}^n$, that is, the positive orthant $\mathbb{R}^n_{+}$, in the following, we denote by $\Delta\prec\mathbb{R}^n_{+}$ a face of $\mathbb{R}^n_{+}.$   For each face 
$\Delta\prec\mathbb{R}^n_{+}$ such that $\Gamma_{+}(f^{k-1}) 
\cap \Delta \neq \emptyset$, we set 
\[
I_0^2(\Delta)= \bigsetdef{ j=1,2,\dots, k-2}{
\Gamma_{+}(f^j) \cap \Delta \neq \emptyset } \subset \left\{ 
1,2,\dots,k-2 \right\}
\] and $m_0^2(\Delta)= \sharp I_0^2(\Delta) + 1$, where $\sharp I_0^2(\Delta)$ denotes the cardinality of the set $ I_0^2(\Delta).$ 

Denoting the monomial $x_1^{v_1} \cdots x_{n}^{v_n}$ by $x^{v}$, in which $v = (v_1,\dots,v_n) \in \mathbb{Z}^n_{+}$, we have the following.

\begin{definition} 
	(i) For a polynomial function $f = \displaystyle{ \sum_{v\in \Gamma_{+}(f)}} a_v \cdot 
	x^v$
	on $\mathbb{C}^n$ and $u \in {\Delta}$, 
	we set 
 $$f|_{\Delta} = \sum_{v\in \Gamma_{+}(f) \cap \Delta} a_v \cdot x^v$$
 and $$\Gamma(f|_{\Delta};u) = 
	\left\{v \in \Gamma_{+}(f)\cap \Delta; \ \ \left\langle u, v\right\rangle 
	= {\rm min} \left\langle u,w\right\rangle, \ \ \text{for} \ \ w \in 
	\Gamma_{+}(f) \cap \Delta \right\}.$$ The set $\Gamma(f|_{\Delta};u)$ is called the 
	\textbf{supporting face} of $u$ in $\Gamma_{+}(f) \cap \Delta$.
	
	(ii) For $j \in I_0^2(\Delta) \cup \left\{k -1  \right\}$ and $u \in 
	{\Delta}$, we define the \textbf{$u$-part} $f_{u}^{j}$ of $f^j$ by 
	\[
	f_{u}^{j} = 
	\displaystyle{\sum_{v\in \Gamma(f^{j}|_{\Delta};u)} a_v \cdot x^v},
	\] 
	where $f^j = 
	\displaystyle{\sum_{v\in \Gamma_{+}(f^j)} a_v \cdot x^v}$.
\end{definition}

For each face $\Delta$ in $\mathbb{R}^n_{+}$ of $\mathbb{R}^n_{+}$ such that 
$\Gamma_{+}(f^{k-1}) \cap \Delta \neq \emptyset$, let us set 
$$
p_{\Delta} = 
\big(\displaystyle{\prod_{j\in I_0^2(\Delta)}} f^j\big)\cdot f^{k-1}
$$ 
and consider its Newton polygon $\Gamma_{+}(p_{\Delta}) = \left\{\sum_{j \in 
	I_0^2(\Delta)} \Gamma_{+}(f^j) \right\} + \Gamma_{+}(f^{k-1}) \subset 
\mathbb{R}^n_{+}$. Let $\gamma_{1}^{\Delta},\dots, 
\gamma_{\nu_0^2(\Delta)}^{\Delta}$ be the compact faces of 
$\Gamma_{+}(p_{\Delta}) \cap \Delta (\neq \emptyset)$ such that 
$\dim\gamma_{i}^{\Delta} = \dim\Delta -1$. Then, for each $1\leq i \leq 
\nu_0^2(\Delta)$, there exists a unique primitive vector $u_{i}^{\Delta} \in {\rm 
	Int}({\Delta})$ which takes its 
minimal in $\Gamma_{+}(p_{\Delta}) \cap \Delta$ exactly on 
$\gamma_{i}^{\Delta}$.

For $j \in I_0^2(\Delta) \cup \left\{k-1 \right\}$, set $\gamma(f^j)_{i}^{\Delta} 
:= \Gamma(f^{j}|_{\Delta};u_{i}^{\Delta})$ and $(d_0^2)_{i}^{\Delta}:= {\rm 
	min}_{w\in \Gamma_{+}(f^{k-1}) \cap \Delta} \left\langle 
u_{i}^{\Delta},w\right\rangle$. Note that we have $$\gamma_{i}^{\Delta} = 
\displaystyle{\sum_{j \in I_0^2(\Delta) \cup \left\{k-1 \right\}}} \gamma 
(f^j)_{i}^{\Delta}$$ for any face $\Delta$ in $\mathbb{R}^n_{+}$ satisfying
$\Gamma_{+}(f^{k-1}) \cap \Delta \neq \emptyset$ and $1 \leq i \leq \nu_0^2(\Delta)$. 
For each face $\Delta$ in $\mathbb{R}^n_{+}$ such that $\Gamma_{+}(f^{k-1}) \cap 
\Delta \neq \emptyset$, $\dim\Delta \geq {\rm m_0^2}(\Delta)$ and $1 \leq i 
\leq \nu_0^2(\Delta)$, we set $I_0^2(\Delta) \cup \left\{k-1\right\} = 
\left\{j_1,j_2,\dots,j_{{\rm m_0^2}(\Delta)-1},k-1=j_{{\rm m_0^2}(\Delta)} \right\}$ 
and $$(K_0^2)_{i}^{\Delta} := \sum_{{{ \alpha_1+\dots+\alpha_{{\rm m_0^2}(\Delta)}  =  
			\dim\Delta -1} \atop {\alpha_q \geq 1 \ \ \text{for} \ \ q\leq 
			{\rm 
				m_0^2}(\Delta)-1}} \atop {\alpha_{{\rm m_0^2}(\Delta)} \geq 0}}^{} {\rm 
	Vol}_{\mathbb{Z}}(\underbrace{\gamma(f^{j_1})_{i}^{\Delta},\dots,\gamma(f^{j_1})_{i}^{\Delta}}_{\alpha_1-\text{times}},\dots,\underbrace{\gamma(f^{j_{{\rm
				m_0^2}(\Delta)}})_{i}^{\Delta},\dots,\gamma(f^{j_{{\rm 
				m_0^2}(\Delta)}})_{i}^{\Delta}}_{\alpha_{{\rm 
				m}(\Delta)}-\text{times}}).$$ Here,
$${\rm 
	Vol}_{\mathbb{Z}}(\underbrace{\gamma(f^{j_1})_{i}^{\Delta},\dots,\gamma(f^{j_1})_{i}^{\Delta}}_{\alpha_1-\text{times}},\dots,\underbrace{\gamma(f^{j_{{\rm
				m_0^2}(\Delta)}})_{i}^{\Delta},\dots,\gamma(f^{j_{{\rm 
				m_0^2}(\Delta)}})_{i}^{\Delta}}_{\alpha_{{\rm 
				m}(\Delta)}-\text{times}})$$ is the 
normalized $(\dim\Delta -1)$-dimensional mixed volume with respect to 
the lattice $(\mathbb{Z}^n \cap \Delta) \cap L(\gamma_{i}^{\Delta})$, in which $L(\gamma_{i}^{\Delta})$ is the smallest linear subspace of $\mathbb{R}^n$ containing $\gamma_{i}^{\Delta}$ (see 
Definition $2.6$, pg $205$ from \cite{GKZ}). For $\Delta$ such that 
$\dim\Delta=1$, we set $$(K_0^2)_{i}^{\Delta} = {\rm 
	Vol}_{\mathbb{Z}}(\underbrace{\gamma(f^{k-1})_{i}^{\Delta},\dots,\gamma(f^{k-1})_{i}^{\Delta}}_{0-\text{times}})
:= 1 $$ (in this case $\gamma(f^{k-1})_{i}^{\Delta}$ is a point).

Now, let $X_t \subset \mathbb{C}^n$ be the germ
given by $V(f_t^1, \dots, f_t^{k_0 -2})$ and $f_t=f_t^{k_0 -1}, g_t=f_t^{k_0}$ be non-constant polynomial functions on $X_t$. With the previous notations, Matsui and Takeuchi prove the following \cite[last statement of Theorem $3.12$]{MT1}.

\begin{theorem}\label{last statement of ThMT}
   The Euler characteristic of the Milnor fiber of $f_t=f_t^{k_0-1}$ at
$0\in f_t^{-1}(0)$ is given by
$$\sum_{{\Gamma_{+} 
			(f_t) \cap \Delta \neq \emptyset} \atop {\dim \Delta \ \geq \ m_0^2(\Delta)}} 
			(-1)^{\dim \Delta \ - \ m_0^2(\Delta)} \left( \sum_{i=1}^{\nu_0^2(\Delta)} 
			(d^2_0)_{i}^{\Delta} \cdot (K_0^2)_{i}^{\Delta} \right).$$
\end{theorem}

If the family $\{p_t\}_t$  is Newton-admissible, the stratifications $\mathcal{V}_{f_t}$ of $X_t$ and $\mathcal{V}_{g_t}$ of $X_{t}^{g_t}$ satisfy Whitney’s conditions. Therefore, by Theorem
\ref{last statement of ThMT}
\begin{equation}\label{NB2}
			{\rm B}_{f_t,X_{t}}(0)   =  \sum_{{\Gamma_{+} 
			(f_t) \cap \Delta \neq \emptyset} \atop {\dim \Delta \ \geq \ m_0^2(\Delta)}} 
			(-1)^{\dim \Delta \ - \ m_0^2(\Delta)} \left( \sum_{i=1}^{\nu_0^2(\Delta)} 
			(d^2_0)_{i}^{\Delta} \cdot (K_0^2)_{i}^{\Delta} \right)\cdot 
			\rm{Eu}_{X_{t}}(T_{\Delta} ),
	\end{equation}
in which $T_{\Delta} = S^{I_{\Delta}}(K_0^2) \cap (\{t\}\times\mathbb{C}^n)\setminus\{f=0\}$ and $\mathbb{C}^{I_{\Delta}}$ is the subspace of $\mathbb{C}^n$ corresponding with the face $\Delta$ of $\mathbb{R}^n_{+}$.

\begin{remark}
    As the family $\{p_t\}_t$ is Newton-admissible, for any $k \in K_0$, the Newton boundary $\Gamma(f^k_t)$ does not depend on $t$. Then, in Equation (\ref{NB2}), $m_0^2(\Delta)$, $\nu_0^2(\Delta)$, $(d_0^2)_i^{\Delta}$ and $(K_0^2)_i^{\Delta}$ do not depend on $t$. 
\end{remark}

As before, for each face 
$\Delta$ in $\mathbb{R}^n_{+}$ such that $\Gamma_{+}(f^{k_0 -1}_t) 
\cap \Delta \neq \emptyset$, we set 
\[
I_0^{1}(\Delta)= \bigsetdef{ j=1,2,\dots, k-2, k_0}{
\Gamma_{+}(f^j_t) \cap \Delta \neq \emptyset } \subset \left\{ 
1,2,\dots,k-2, k_0 \right\}
\] and $m_0^1(\Delta)= \sharp I_0^1(\Delta) + 1$. Applying the Theorem 
\ref{last statement of ThMT} again, we have
	\begin{equation}\label{NB1}
			{\rm B}_{f_t,X_{t}^{g_t}}(0)   =  \sum_{{\Gamma_{+} 
			\big(f_t^{k_0-1}\big) \cap \Delta \neq \emptyset} \atop {\dim \Delta \ \geq \ m_0^1(\Delta)}} 
			(-1)^{\dim \Delta \ - \ m_0^1(\Delta)} \left( \sum_{i=1}^{\nu_0^{1}(\Delta)} 
			(d^1_0)_{i}^{\Delta} \cdot (K_0^1)_{i}^{\Delta} \right)\cdot 
			\rm{Eu}_{X_{t}^{g_t}}(T_{\Delta}^{g_t} ),
	\end{equation}
in which $T_{\Delta}^{g_t}= S^{I_{\Delta}}(K_0^1) \cap (\{t\}\times\mathbb{C}^n)\setminus\{f=0\}$ and $\mathbb{C}^{I_{\Delta}}$ is the subspace of $\mathbb{C}^n$ corresponding with the face $\Delta$ of $\mathbb{R}^n_{+}$ (there exists a natural action from the algebraic torus $T = (\mathbb{C}^{*})^n$ to $\mathbb{C}^n$. Moreover, the $T$-orbits of this action 
are in a 1-1 relation with the faces $\Delta$ of $\mathbb{R}^n_{+}$).

Similarly to the Equation (\ref{NB2}), in Equation (\ref{NB1}), $m_0^1(\Delta)$, $\nu_0^1(\Delta)$, $(d_0^1)_i^{\Delta}$ and $(K_0^1)_i^{\Delta}$ do not depend on $t$.

Moreover, as we said before, given a Newton non-degenerate complete intersection $X \subset \mathbb{C}^n$ and a generic linear form  with respect to $X$, $h: (\mathbb{C}^m,0) \to (\mathbb{C},0)$, the variety $X \cap h^{-1}(0) \subset \mathbb{C}^n$ is Newton non-degenerate. Therefore, using Theorems \ref{BLS} and \ref{last statement of ThMT}, we can also compute $\rm{Eu}_{X_{t}}(T_{\Delta} )$ and $\rm{Eu}_{X_{t}^{g_t}}(T_{\Delta}^{g_t})$ in terms of volumes of Newton polyhedra. Furthermore, these numbers do not depend on $t$ either. Then, from Theorem  \ref{diferencadenumerosdebrasselet1}, we have the following.

\begin{corollary}\label{corollary43}
Let $X_t \subset \mathbb{C}^n$ be the germ
given by $V(f_t^1, \dots, f_t^{k_0 -2})$ and $f_t=f_t^{k_0 -1}$ and $g_t=f_t^{k_0}$ be non-constant polynomial functions on $X_t$. If the family $\{p_t\}_t$ is Newton-admissible, then the number of stratified Morse critical points $m_t$ does not depend on $t$, for $t$ small enough. The same is true for the
Brasselet numbers $B_{f_t,X_t}(0)$ and $B_{f_t,X_t^{g_t}}(0)$.
\end{corollary}

\begin{proof}
    The constancy of the numbers $\chi \big(V_l^{g_t} \cap f^{-1}_t(\delta) \cap B_{\varepsilon} 
\big)$ can be obtained exactly as we did above for the Brasselet numbers $ B_{f_t,X_t}(0)$, $ B_{f_t,X_t^{g_t}}(0)$ 
 and for the local Euler obstructions $   \rm{Eu_{{X_t}^{g_t}}}(V_l^{g_t})$, $
 \rm{Eu_{X_t}}(V_l^{g_t}) $, since we can also apply Theorem \ref{last statement of ThMT}.
\end{proof}


\section*{Acknowledgements}
Part of this work was developed at IMPA (Brazil) through the project Thematic Program on Singularities, providing useful discussions with the authors. Moreover, the authors were partially supported by this project to which the authors are grateful.

\section*{Conflict of interests statement}

On behalf of all authors, the corresponding author states that there is no conflict of interest.

\end{document}